\documentclass[11pt]{amsart}
\usepackage{amssymb, latexsym}
\theoremstyle{plain}
\newtheorem{theorem}{Theorem}
\newtheorem{corollary}{Corollary}
\newtheorem*{thm*}{Theorem}
\newtheorem {lemma}{Lemma}
\newtheorem{proposition}{Proposition}

\numberwithin{equation}{section}

\begin{document}

\title[ A secretary for Messrs Luce and Mallows ] {A secretary for Messrs. Luce and Mallows }

\author{Ross G. Pinsky}

%\noindent  pinsky@math.technion.ac.il\ \ \ \ tel: 972-4-829-4083\ \ \  fax: 972-4-829-3388

\address{Department of Mathematics\\
Technion---Israel Institute of Technology\\
Haifa, 32000\\ Israel}
\email{ pinsky@technion.ac.il}

\urladdr{https://pinsky.net.technion.ac.il/}

\subjclass[2010]{60C05, 60G40} \keywords{Secretary problem, Luce distribution, Mallows distribution }
\date{}

\begin{abstract}

We analyze the secretary problem in the case that the $n$ ranked items arrive
not in uniformly random  order but rather according to a  certain type of Luce distribution or according to a Mallows distribution on the set $S_n$ of permutations of $[n]$.
The secretary problem for the Mallows distributions with parameter $q\in(0,1)$ was analyzed in a previous paper; in this paper the case $q>1$ is also analyzed.
The Luce distribution with the  class $\{q_j\}_{j=1}^\infty$ of weights is related in a certain sense to  the Mallows distribution with parameter $q$, but is more difficult to analyze.
It turns out that for every $n$ and every strategy, the probabilities for the  secretary problem when the smallest number is considered of highest rank for the Luce distribution with this  class of weights coincides with those for the corresponding Mallows distribution.
We  analyze the asymptotic optimal strategy and corresponding limiting probability for the above cases, as well as for the Luce distributions with other classes of
weights, such as the Sukhatme weights.
\end{abstract}

\maketitle
\section{Introduction and Statement of Results}\label{intro}
Recall the classical secretary problem: For $n\in\mathbb{N}$, a set of $n$ ranked items is revealed, one item at a time, to an observer whose objective is to select the item with
the highest rank. The order of the items is completely random; that is, each of the  $n!$ permutations of the ranks is equally likely.
At each stage, the observer only knows the relative ranks of the items that have arrived thus far, and must  either select the current item, in which case the process terminates, or reject it and continue to the next item. If the observer rejects the first $n-1$ items, then the $n$th and final item to arrive must be accepted.
As is very well known, asymptotically as $n\to\infty$, the optimal  strategy is to reject the first $M_n$ items, where $M_n\sim \frac ne$, and then to select the first
later-arriving item whose rank is higher than that of any of the first $M_n$ items (if such an item exists).
The limiting probability of successfully selecting the item of highest rank is $\frac1e$.

One can consider the secretary problem when the permutation indicating the relative ranks of the items is not uniformly distributed, but rather  distributed according to
some other probability measure on the set $S_n$ of permutations of $[n]$.
When the distribution is not the uniformly  random one, the case that the largest  number, that is  $n$,  is considered  to be of  highest rank and the case that the smallest number, that is  1, is considered to be of  highest rank must be analyzed separately.
In \cite{P22a}, we initiated a study of the secretary problem when the permutation indicating the relative ranks of the items is distributed according to a Mallows distribution with parameter $q=q_n\in(0,1)$ and the largest number is of highest rank.
In this paper, we will complete the analysis of the secretary problem for a Mallows distribution by also allowing  $q=q_n>1$, and by also considering the case that the smallest number is of highest rank.
The Mallows distributions are described two paragraphs  below. The major motivation for completing this analysis comes from the main focus of this paper, which is the analysis of the  secretary problem when a Luce distribution is placed on $S_n$.
The Luce distributions on $S_n$, which we now describe,  come in two types and  are parameterized by a set of positive weights $\{\theta_i\}_{i=1}^n$.
It turns out that there is an intimate connection between the secretary problem for the Mallows distribution and for the secretary  problem
for one of the two types of Luce  distributions for appropriate choices of the weights, in the case that the smallest number is  of highest rank.

For $n\in\mathbb{N}$ and a collection $\{\theta_i\}_{i=1}^n$ of positive weights,  the first type of  Luce distribution $P_n^{\mathcal{L};\{\theta_i\}_{i=1}^n}$ is defined by
\begin{equation}\label{Luce}
P_n^{\mathcal{L};\{\theta_i\}_{i=1}^n}(\sigma_1\cdots\sigma_n)=\theta_{\sigma_1}\frac{\theta_{\sigma_2}}{\omega_n-\theta_{\sigma_1}}\frac{\theta_{\sigma_3}}{\omega_n-\theta_{\sigma_1}-\theta_{\sigma_2}}
\cdots\frac{\theta_{\sigma_n}}{\theta_{\sigma_n}},\ \text{where}\ \omega_n=\sum_{i=1}^n\theta_i.
\end{equation}
Here we have written the permutation $\sigma\in S_n$ in one line notation, $\sigma=\sigma_1\cdots\sigma_n$.
(Note that $\{\theta_i\}_{i=1}^n$ and $\{c\theta_i\}_{i=1}^n$, for $c>0$, yield the same distribution, so one can always normalize and assume that $\omega_n=1$.)
The second type of Luce distribution, which we denote by $P_n^{\mathcal{L}_\text{inv};\{\theta_i\}_{i=1}^n}$ is just the measure induced by the first type of Luce distribution via the map that takes a permutation $\sigma\in S_n$ to its inverse $\sigma^{-1}$.
Thus,
\begin{equation}\label{Luceinv}
\begin{aligned}
&P_n^{\mathcal{L}_\text{inv};\{\theta_i\}_{i=1}^n}(\sigma_1\cdots\sigma_n)=\theta_{\sigma^{-1}_1}\frac{\theta_{\sigma^{-1}_2}}{\omega_n-\theta_{\sigma^{-1}_1}}
\frac{\theta_{\sigma^{-1}_3}}{\omega_n-\theta_{\sigma^{-1}_1}-\theta_{\sigma^{-1}_2}}
\cdots\frac{\theta_{\sigma^{-1}_n}}{\theta_{\sigma^{-1}_n}},\\
& \text{where}\ \omega_n=\sum_{i=1}^n\theta_i.
\end{aligned}
\end{equation}

The Luce distributions have an online construction
via weighted sampling without replacement. Consider a box containing balls numbered from 1 to $n$. For each $i\in[n]$, at stage $i$ a ball is picked with probability proportional to its weight relative to the weights of all the remaining balls. The number of the ball picked at stage $i$ is the value of $\sigma_i$ under
$P_n^{\mathcal{L};\{\theta_i\}_{i=1}^n}$  and is the value of $\sigma^{-1}_i$ under $P_n^{\mathcal{L}_\text{inv};\{\theta_i\}_{i=1}^n}$.
The Luce distributions arise in many natural ways, one  of which will be seen further on in this paper; see \cite{CDK} for  examples and references, and see also
\cite{BCD}.
We note that \cite{CDK} considers the Luce measure
$P_n^{\mathcal{L};\{\theta_i\}_{i=1}^n}$ while \cite{BCD} studies the Luce measure $P_n^{\mathcal{L}_\text{inv};\{\theta_i\}_{i=1}^n}$.

The Mallows distributions are obtained  by exponentially tilting via the inversion statistic.
Recall that the inversion statistic $I_n$ is defined
by
$$
I_n(\sigma)=\sum_{1\le i<j\le n} 1_{\{\sigma_j<\sigma_i\}}=\sum_{1\le i<j\le n}1_{\{\sigma^{-1}_i<\sigma^{-1}_j\}},\ \sigma\in S_n.
$$
The Mallows distribution on $S_n$ with parameter $q\in(0,\infty)$, which we denote by $P_n^{\mathcal{M};q}$, is defined
by
\begin{equation}\label{Mallows}
P_n^{\mathcal{M};q}(\sigma)=\frac{q^{I_n(\sigma)}}{Z_n(q)},\ \sigma\in S_n,
\end{equation}
where the normalization constant $Z_n(q)$ is given by
\begin{equation}\label{Zn}
Z_n(q)=\frac{\prod_{j=1}^n(1-q^j)}{(1-q)^n},\ \text{for}\ q\neq1.
\end{equation}
When $q=1$, $Z_n(1)=n!$ and the distribution is the uniform one.
Note that for $q\in(0,1)$, permutations with fewer inversions have higher probabilities than those with more inversions, and vice versa when $q>1$.
Note also that since $I_n(\sigma)=I_n(\sigma^{-1})$, the  Mallows distributions are invariant under the map $\sigma\to\sigma^{-1}$.
There are two alternative online methods for creating  a Mallows permutation; they   will be described below.

Because of the nature of  its online construction described above, a Luce distribution of the type  $P_n^{\mathcal{L};\{\theta_i\}_{i=1}^n}$ has sometimes been called in the literature a $p$-biased distribution; see \cite{PT} and references therein, and  \cite{CRSBDJN}. (The notation $\{p_i\}$ is used there instead of $\{\theta_i\}$.)
Here is another  online construction using the $\{\theta_i\}_{i=1}^n$; it leads to what is called a  $p$-shifted distribution; see \cite{PT} and references there in, and
\cite{GO}.
Again consider the box with the balls numbered from 1 to $n$.
At stage one,  choose a ball, whose number will be the value for $\sigma_1$, with probabilities proportional to the weights $\{\theta_i\}_{i=1}^n$.
If $\sigma_1=l$, then at stage two, choose one of the remaining balls with probabilities proportional to the weights $\{\theta_i\}_{i=1}^{n-1}$, where balls number 1 though $l-1$ respectively receive weights $\theta_1,\cdots,\theta_{l-1}$, and balls number $l+1$  though $n$ respectively receive weights $\theta_l,\cdots, \theta_{n-1}$.
One continues like this, thus  at stage $m+1$, if one has $\sigma_1\cdots\sigma_m=l_1\cdots l_m$, then
the remaining $n-m$ numbers, $[n]-\{l_r\}_{r=1}^m$, arranged in increasing order, are given respectively the weights $\{\theta_i\}_{i=1}^{n-m}$.
Here is an example to illustrate the construction. Let $n=4$, and let $\theta_i=i$. Then $\sigma_1$ is chosen from  $\{1,2,3,4\}$
with respective     probabilities $\frac1{10},\frac2{10},\frac3{10},\frac4{10}$. Now say that $\sigma_1=3$. Then $\sigma_2$ is chosen from
$\{1,2,4\}$ with respective probabilities $\frac16,\frac26,\frac36$. Now say that $\sigma_2=1$. Then  $\sigma_3$ is chosen from $\{2,4\}$ with respective probabilities
$\frac13,\frac23$. Then $\sigma_4$ is equal to the final remaining number.
It turns out that the Mallows distribution with parameter $q$ is equal to the above $p$-shifted distribution in the case that $\theta_i=q^i$ \cite{GO, P22b}.
Note that as with the $p$-biased construction leading to the Luce distributions of the two types, we could also create two types of distributions
in the case of $p$-shifted by also looking at the measure induced by the map $\sigma\to\sigma^{-1}$. However, in this paper we are only interested in the
$p$-shifted distribution as it relates to the Mallows distributions, and as we have already noted, the Mallows distributions are invariant under the map
 $\sigma\to\sigma^{-1}$.
Some comparisons between the Luce distribution with weights $\{\theta_i\}_{i=1}^n=\{q^i\}_{i=1}^n$ and the Mallows distribution with parameter $q$ are noted in the remark following Corollary \ref{LM} below.

Although the description of the construction of the $p$-shifted distribution is more complicated than that of the $p$-biased (Luce) distributions, the $p$-shifted distributions are much more tractable than the Luce distributions.
It is noted in \cite{CDK} that under a Luce distribution,  the behavior of most of the standard permutation statistics is still open.
The reason that   the $p$-shifted distribution is quite tractable is
 that the Lehmer code for this distribution is a set of independent random variables---see Proposition 1.7 in \cite{P22b}.
What this means is that the $p$-shifted distribution with weights $\{\theta_i\}_{i=1}^n$   has the following alternative online construction.
Let $\{X_j\}_{j=2}^n$ be independent random variables with distributions given by
\begin{equation}\label{X-inv}
P(X_j=i-1)=\frac{\theta_i}{\sum_{l=1}^j\theta_l}, \ i=1,\cdots, j.
\end{equation}
At stage one, place the number 1 down on an empty row. For each $j=2,\cdots n$, at stage $j$ when the numbers $1\cdots j-1$ are already on the row, place the number $j$ down so that $X_j$ numbers are to its right. It can be shown that
the permutation $\sigma$ thus constructed has the $p$-shifted distribution with weights $\{\theta_i\}_{i=1}^n$.
Note that the number of inversions in $\sigma$ is then given by $I_n(\sigma)= \sum_{j=2}^nX_j$.
The analysis of the secretary problem for the Mallows distribution with parameter $q$ relies on this construction with $\theta_i=q^i$, and also on the original definition via exponential tilting in
\eqref{Mallows}.

We now turn to the analysis of the secretary problem.
For the secretary problem where the largest number is  of highest rank,
denote by $\mathcal{S}^{\uparrow}(n,M)\subset S_n$ the event that the highest ranked item $n$ is selected via  the strategy that rejects the first $M$ items  and then selects the first later-arriving item  whose rank is higher than any of the first $M$ items (if such an item exists).
Similarly, for the secretary problem where the smallest number is  of highest rank,
denote by $\mathcal{S}^{\downarrow}(n,M)\subset S_n$ the event that the highest ranked item $1$ is selected via  the strategy that rejects the first $M$ items  and then selects the first later-arriving item  whose rank is higher than any of the first $M$ items (if such an item exists).

We begin with exact results for every $n$, and then use them to obtain more interesting asymptotic results as $n\to\infty$.
In \cite{P22a} we proved the following theorem concerning the secretary problem for the Mallows distribution
where the largest number is  of highest rank, and with
parameter $q\in(0,1)$. It is easy to check that the same formula holds  and the same proof works also
for $q>1$.

\begin{thm*} {\bf(P-1)}
Consider the secretary problem for a Mallows distribution in the case that the largest number is of highest rank.
For  $n\in\mathbb{N}$  and $q\in(0,\infty)-\{1\}$,
\begin{equation}\label{exactform}
P_n^{\mathcal{M};q}(\mathcal{S}^\uparrow(n,M))=\begin{cases}\frac{1-q}{1-q^n}q^{n-M-1}(1-q^M)\sum_{j=M+1}^n\frac1{1-q^{j-1}},\  M\in\{1,\cdots, n-1\};\\
\frac{1-q}{1-q^n}q^{n-1},\ M=0.\end{cases}
\end{equation}
\end{thm*}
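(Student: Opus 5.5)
The plan is to decompose the success event according to the position of the item $n$ and then compute each piece using the insertion construction \eqref{X-inv} with $\theta_i=q^i$. For $M\ge1$, the strategy selects $n$ exactly when two things happen: $n$ sits at some position $j\in\{M+1,\dots,n\}$, and the largest of $\sigma_1,\dots,\sigma_{j-1}$ occurs among the first $M$ positions. In that case no item at positions $M+1,\dots,j-1$ beats the first $M$, while $n$ does. So
$$
P_n^{\mathcal{M};q}(\mathcal{S}^\uparrow(n,M))=\sum_{j=M+1}^n P(\sigma_j=n)\,P\bigl(\arg\max_{i\le j-1}\sigma_i\le M\,\big|\,\sigma_j=n\bigr).
$$
For $M=0$ the strategy selects $\sigma_1$, so the answer is just $P(\sigma_1=n)$.

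First I compute $P(\sigma_j=n)$. In the insertion construction, $n$ is placed last with $X_n$ numbers to its right, so $\sigma_j=n$ iff $X_n=n-j$. By \eqref{X-inv} with $\theta_i=q^i$,
$$
P(\sigma_j=n)=\frac{(1-q)q^{n-j}}{1-q^n}.
$$
Taking $j=1$ gives the $M=0$ case of \eqref{exactform} immediately. Moreover, $X_n$ is independent of $X_2,\dots,X_{n-1}$, and these determine the relative order $\tau\in S_{n-1}$ of $1,\dots,n-1$. Hence, conditioned on $\sigma_j=n$, $\tau$ is still $P_{n-1}^{\mathcal{M};q}$-distributed. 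Deleting $n$ from $\sigma$ gives $\tau$, and $\sigma_1\cdots\sigma_{j-1}=\tau_1\cdots\tau_{j-1}$. The conditional probability therefore equals the probability that the maximum of $\tau_1,\dots,\tau_{j-1}$ lies in the first $M$ positions, where $\tau\sim P_{n-1}^{\mathcal{M};q}$.

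The key step, which I expect to be the main obstacle, is to show that the relative order (pattern) of the first $j-1$ entries of a Mallows$(n-1)$ permutation is Mallows$(j-1)$-distributed. I would argue through the inverse. In the insertion construction, the pattern of the values $1,\dots,m$ is determined by $X_2,\dots,X_m$ alone, so it is Mallows$(m)$. Since $P^{\mathcal{M};q}$ is invariant under $\sigma\to\sigma^{-1}$, the same holds for $\tau^{-1}$: the pattern of $\tau^{-1}$ on the values $1,\dots,j-1$ is Mallows$(j-1)$. That pattern is exactly the inverse of the pattern of $\tau$ on the positions $1,\dots,j-1$. Using inverse-invariance once more, the positional prefix pattern is Mallows$(j-1)$. (As a fallback, one could verify this directly from \eqref{Mallows}: split $I$ into within-prefix, within-suffix and cross inversions, and sum over the rest.)

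Given this, the maximum of a Mallows$(j-1)$ permutation is at position $k$ with probability $(1-q)q^{j-1-k}/(1-q^{j-1})$, by the same $X$-argument as before. Summing over $k=1,\dots,M$ gives $q^{j-1-M}(1-q^M)/(1-q^{j-1})$. Multiplying by $(1-q)q^{n-j}/(1-q^n)$ gives the summand $\frac{1-q}{1-q^n}q^{n-M-1}(1-q^M)\frac1{1-q^{j-1}}$, and summing over $j=M+1,\dots,n$ yields \eqref{exactform}. Nothing in this argument uses $q<1$, so it holds for all $q\in(0,\infty)-\{1\}$.
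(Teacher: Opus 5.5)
Your proof is correct and covers all $q\in(0,\infty)-\{1\}$ at once. In particular, the double use of inverse-invariance is sound: the subword of $\tau^{-1}$ formed by the values $\le m$ lists the positions $1,\dots,m$ in increasing order of their $\tau$-values, so it is the inverse of $\mathrm{Red}(\tau_1\cdots\tau_m)$.

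The paper does not reprove Theorem P-1. It refers to \cite{P22a} and remarks that the argument there extends to $q>1$. However, the tool it does display gives a shorter route that avoids your prefix-pattern lemma. That tool is the independence of the record indicators $U_k=1_{\{\sigma_k=\max(\sigma_1,\dots,\sigma_k)\}}$, proved in Proposition \ref{BrussresultMallows}. The $j$-th piece of the success event is $\{U_j=1\}\cap\bigcap_{k\in\{M+1,\dots,n\}\setminus\{j\}}\{U_k=0\}$, since $n$ is the last record and there is no record in positions $M+1,\dots,j-1$. Moreover $P(U_k=1)=P(X_k=0)=\frac{1-q}{1-q^k}$, so $P(U_k=0)=\frac{q(1-q^{k-1})}{1-q^k}$. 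The product of these over $k=M+1,\dots,n$ telescopes to $q^{n-M}\frac{1-q^M}{1-q^n}$. Swapping the $k=j$ factor for $P(U_j=1)$ multiplies this by $\frac{1-q}{q(1-q^{j-1})}$, which yields the summand of \eqref{exactform}. For $M=0$ the event is $\{U_2=\cdots=U_n=0\}$, with probability $\prod_{k=2}^n\frac{q(1-q^{k-1})}{1-q^k}=\frac{1-q}{1-q^n}q^{n-1}$.

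Your argument is effectively a two-stage version of this. You locate $n$ through $X_n$, and then locate the prefix maximum through the last insertion variable of a Mallows$(j-1)$ permutation. That costs the extra lemma that prefix patterns of Mallows permutations are Mallows, which is a nice structural fact in its own right. The record route instead reduces the whole computation to a single cylinder event in independent variables.
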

The secretary problem for the Mallows distribution when the smallest number is of highest rank is  obtained easily as a corollary of Theorem P-1.
\begin{corollary}\label{secminmax}
The secretary problem for Mallows distributions when the smallest number is  of highest rank and the secretary problem for the Mallows distributions when the largest number is  of highest rank are related by the following formula:
\begin{equation}\label{sec-rankminmax}
P_n^{\mathcal{M};q}(\mathcal{S}^\downarrow(n,M))=P_n^{\mathcal{M};\frac1q}(\mathcal{S}^\uparrow(n,M)), \ q\in(0,\infty).
\end{equation}
Thus from \eqref{exactform}, for $q\in(0,\infty)-\{0\}$,
\begin{equation}\label{exactformmin}
P_n^{\mathcal{M};q}(\mathcal{S}^\downarrow(n,M))=\begin{cases}\frac{1-q}{1-q^n}(1-q^M)\sum_{j=M+1}^n\frac{q^{j-1}}{1-q^{j-1}},\ \text{if}\ M\ge1;\\
\frac{1-q}{1-q^n},\ \text{if}\ M=0.\end{cases}
\end{equation}

\end{corollary}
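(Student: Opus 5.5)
The plan is to use the complementation map $\tau:S_n\to S_n$, $(\tau\sigma)_i=n+1-\sigma_i$. Two properties of $\tau$ carry the argument. First, $\tau$ reverses every relative comparison: $\sigma_i<\sigma_j$ if and only if $(\tau\sigma)_i>(\tau\sigma)_j$. Second, $\tau$ turns inversions into non-inversions and vice versa, so $I_n(\tau\sigma)=\binom n2-I_n(\sigma)$.

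The first step is a pathwise statement: $\sigma\in\mathcal{S}^\downarrow(n,M)$ if and only if $\tau\sigma\in\mathcal{S}^\uparrow(n,M)$. The strategy with parameter $M$ only uses the relative ranks of the items arriving in order. Under $\tau$, "smaller than all earlier items" becomes "larger than all earlier items". Hence the stopping time of the min-strategy on $\sigma$ equals the stopping time of the max-strategy on $\tau\sigma$; this includes the case where nothing is selected before the end and the last item is forced. The item selected is $1$ in $\sigma$ exactly when it is $n$ in $\tau\sigma$. So $\mathcal{S}^\downarrow(n,M)=\tau^{-1}(\mathcal{S}^\uparrow(n,M))$.

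The second step is the measure statement. We have
\[
P_n^{\mathcal{M};q}(\tau\sigma)=\frac{q^{\binom n2-I_n(\sigma)}}{Z_n(q)}\propto (1/q)^{I_n(\sigma)}.
\]
Since both sides are probability measures, this says that the pushforward of $P_n^{\mathcal{M};q}$ under $\tau$ is $P_n^{\mathcal{M};1/q}$. Combining the two steps gives
\[
P_n^{\mathcal{M};q}(\mathcal{S}^\downarrow(n,M))=P_n^{\mathcal{M};q}(\tau^{-1}\mathcal{S}^\uparrow(n,M))=P_n^{\mathcal{M};1/q}(\mathcal{S}^\uparrow(n,M)),
\]
which is \eqref{sec-rankminmax}. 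For $q=1$ this is trivial.

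The last step, and the only real work, is algebraic. We substitute $1/q$ for $q$ in \eqref{exactform} and simplify. For $M=0$,
\[
\frac{1-q^{-1}}{1-q^{-n}}\,q^{-(n-1)}=\frac{(q-1)q^{n-1}}{q^n-1}\,q^{1-n}=\frac{1-q}{1-q^n}.
\]
For $M\ge1$, we use $1-q^{-k}=-q^{-k}(1-q^k)$ in each factor. The prefactor becomes
\[
\frac{1-q^{-1}}{1-q^{-n}}=\frac{1-q}{1-q^n}\,q^{n-1},
\]
together with $q^{-(n-M-1)}$ and $1-q^{-M}=-q^{-M}(1-q^M)$. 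The summand becomes $\frac{1}{1-q^{-(j-1)}}=-\frac{q^{j-1}}{1-q^{j-1}}$. The powers of $q$ cancel: $q^{n-1}\cdot q^{-(n-M-1)}\cdot q^{-M}=1$. The two minus signs also cancel. This yields \eqref{exactformmin}.

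I expect no genuine obstacle. The only care needed is in the first step, checking that the forced selection of the last item transforms correctly under $\tau$, which it does.
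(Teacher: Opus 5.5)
Your proposal is correct and is essentially the paper's own proof. Both arguments use the complement map $\sigma\mapsto\sigma^{\text{comp}}$ and the identity $I_n(\sigma^{\text{comp}})=\frac{n(n-1)}2-I_n(\sigma)$ to show that $P_n^{\mathcal{M};q}$ pushes forward to $P_n^{\mathcal{M};\frac1q}$. Both then identify the min-secretary event for $\sigma$ with the max-secretary event for $\sigma^{\text{comp}}$, and substitute $\frac1q$ into \eqref{exactform}. The one cosmetic difference is in the normalization: you get it from the fact that both sides are probability measures, while the paper verifies $Z_n(q)=q^{\frac{n(n-1)}2}Z_n(\frac1q)$ directly. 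Your algebra for $M=0$ and for $M\ge1$ is correct.
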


\begin{proof}
Recall that the complement $\sigma^{\text{comp}}$ of a permutation $\sigma=\sigma_1\cdots\sigma_n$ is defined by $\sigma^{\text{comp}}_i=n+1-\sigma_i,\ i\in[n]$.
The number of inversions in $\sigma$ and in $\sigma^{\text{comp}}$ are related by
$I_n(\sigma^{\text{comp}})=\frac {n(n-1)}2-I_n(\sigma)$, From \eqref{Zn}, it follows that $Z_n(q)=q^{\frac{n(n-1)}2}Z_n(\frac1q)$. Thus, from \eqref{Mallows},
$$
P_n^{\mathcal{M};q}(\sigma^{\text{comp}})=\frac{q^{\frac{n(n-1)}2-I_n(\sigma)}}{Z_n(q)}=\frac{(\frac1q)^{I_n(\sigma)}}{Z_n(\frac1q)}=P_n^{\mathcal{M};\frac1q}(\sigma),
$$
which shows that the distribution induced by the Mallows distribution with parameter $q$ under the map $\sigma\to\sigma^{\text{comp}}$ is the Mallows distribution with parameter $\frac1q$.
    It is clear that the  secretary  problem for a permutation $\sigma$ where the
largest number is  of highest rank is identical to the secretary  problem for $\sigma^\text{comp}$  where the
 smallest number is  of highest rank.
 Equation \eqref{exactformmin} follows by replacing $q$ by $\frac1q$ in \eqref{exactform}.
\end{proof}
\noindent \bf Remark.\rm\ This is the appropriate place to correct an incorrect comment that we wrote in \cite{P22a}.
We claimed there that  it suffices to consider the case $q\in(0,1)$ because
if $\sigma$ is distributed according to a Mallows distribution with parameter $q$, then
the reverse $\sigma^\text{rev}=\sigma_n\cdots\sigma_1$ is distributed according to a Mallows distribution with parameter $\frac1q$.
But the secretary problem for the reverse of a permutation is not the same as the secretary problem for the original permutation.

We now turn to the secretary problem for the Luce distributions  $P_n^{\mathcal{L}_\text{inv};\{\theta_i\}_{i=1}^n}$ when the smallest number is of highest rank.
This is the only situation that we can handle for the Luce distributions. That is, for $P_n^{\mathcal{L}_\text{inv};\{\theta_i\}_{i=1}^n}$ , we cannot handle the case  that the largest number is of highest rank, and for $P_n^{\mathcal{L};\{\theta_i\}_{i=1}^n}$ we cannot handle either case. As will be seen in the proof of the following theorem, what allows us to handle the secretary problem for the $P_n^{\mathcal{L}_\text{inv};\{\theta_i\}_{i=1}^n}$-type Luce distributions when the smallest number is of highest rank is
the representation of these distributions via \eqref{Red} in Theorem CDK below, along with the fact that the minimum (but not the maximum) of independent exponential distributions has an exponential distribution.
Furthermore, the remark after Proposition \ref{BrussresultLuce} below suggests that these three other secretary problems are fundamentally different and harder than the one we analyze here.
\begin{theorem}\label{lucesec}
Consider the secretary problem for a
$P_n^{\mathcal{L}_\text{inv};\{\theta_j\}_{j=1}^n}$-type Luce
  distribution in the case that the smallest number is of highest rank.
For  $n\in\mathbb{N}$,
\begin{equation}\label{lucesecmin}
P_n^{\mathcal{L}_\text{inv};\{\theta_j\}_{j=1}^n}(\mathcal{S}^\downarrow(n,M))=
\begin{cases}\frac{\sum_{i=1}^M\theta_i}{\sum_{i=1}^n\theta_i}\sum_{j=M+1}^n
\frac{\theta_j}{\sum_{i=1}^{j-1}\theta_i},\ M\in\{1,\cdots, n-1\};\\
\frac{\theta_1}{\sum_{i=1}^n\theta_i},\ M=0.\end{cases}
%\frac{\theta_j}{\sum_{i=1}^n\theta_i}\thinspace\frac{\sum_{i=1}^M\thetaפ_i}{\sum_{i=1}^{j-1}\theta_i}.
\end{equation}
\end{theorem}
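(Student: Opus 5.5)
Under $P_n^{\mathcal{L}_\text{inv}}$, the balls drawn in order give $\sigma^{-1}_1,\sigma^{-1}_2,\dots$, i.e.\ the positions of the values $1,2,\dots$. It is more convenient to use an exponential-clock representation (the representation \eqref{Red} of Theorem CDK). Attach to each position $i\in[n]$ an independent exponential random variable $E_i$ with rate $\theta_i$, and let the value $\sigma_i$ be the rank of $E_i$ among $E_1,\dots,E_n$ (smallest clock gets value 1). Since the index of the minimum of independent exponentials is chosen with probability proportional to the rates, and by memorylessness the remaining clocks restart afresh, the order in which clocks ring is exactly weighted sampling without replacement. Hence the position holding value $k$ is the $k$-th clock to ring, which is the law $P_n^{\mathcal{L}_\text{inv};\{\theta_j\}}$. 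With this representation, "the smallest number is of highest rank" means: the item at position $i$ is better than that at $i'$ iff $E_i<E_{i'}$.

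For $M=0$ the strategy selects item 1. It succeeds iff $\sigma_1=1$, i.e.\ clock 1 rings first, which has probability $\theta_1/\sum_{i=1}^n\theta_i$.

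For $M\ge1$, decompose the event $\mathcal{S}^\downarrow(n,M)$ according to the position $j\in\{M+1,\dots,n\}$ of the overall best item. The strategy selects position $j$ exactly when two conditions hold. First, $E_j=\min_{i\le n}E_i$. Second, the best among $E_1,\dots,E_{j-1}$ lies among the first $M$ positions; otherwise some earlier position in $(M,j)$ beats all of the first $M$ and is selected before $j$. Conversely, if both conditions hold, no position in $(M,j)$ beats the first $M$, and position $j$ does, so $j$ is selected. Thus the success probability is
\[
\sum_{j=M+1}^n P\Bigl(E_j<\min_{i\ne j}E_i,\ \operatorname{argmin}_{i<j}E_i\in[M]\Bigr).
\]

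The key computation uses the minimum property of exponentials. The variable $A=\min_{i\le M}E_i$ is exponential with rate $\Theta_M:=\sum_{i\le M}\theta_i$, and $B=\min_{M<i<j}E_i$ is exponential with rate $\Theta_{j-1}-\Theta_M$. The position-$j$ event is "among the independent exponentials $E_j$, $A$, $B$, $C=\min_{i>j}E_i$, the order starts with $E_j$, then $A$ comes before $B$", with $C$ irrelevant after the first step. By the competing-exponentials rule and memorylessness, the probability is
\[
\frac{\theta_j}{\Theta_n}\cdot\frac{\Theta_M}{\Theta_{j-1}},
\]
since after $E_j$ rings first, the minimum of $E_1,\dots,E_{j-1}$ lies in $[M]$ with probability $\Theta_M/\Theta_{j-1}$. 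More precisely, conditionally on $E_j$ being the overall minimum, the other clocks exceed $E_j$ and are memoryless, so the relative order of $A$ and $B$ is unaffected. Summing over $j$ gives \eqref{lucesecmin}.

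The main obstacle is justifying the exponential representation rigorously, i.e.\ that ranks of the $E_i$ realize $P_n^{\mathcal{L}_\text{inv}}$ rather than $P_n^{\mathcal{L}}$. This amounts to checking the inverse-versus-direct bookkeeping: the $k$-th ring identifies $\sigma^{-1}_k$. Once that is settled, everything reduces to the two-step competing-clock calculation above.
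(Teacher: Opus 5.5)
Your proof is correct and follows essentially the paper's route: the exponential-clock representation (Theorem CDK), the decomposition by the position $j$ of the value 1 together with the requirement that the minimum of the first $j-1$ clocks lies in $[M]$, and the minimum property of independent exponentials. The one difference is in evaluating $P(E_j=\min_{i\in[n]}E_i,\ \operatorname{argmin}_{i<j}E_i\in[M])$: you condition on $E_j$ being the minimum and use memorylessness of the overshoots, whereas the paper obtains the same value $\frac{\theta_j\sum_{i=1}^M\theta_i}{(\sum_{i=1}^{j-1}\theta_i)(\sum_{i=1}^n\theta_i)}$ from the explicit iterated integral \eqref{uselatertoo} in Lemma 4, and it cites Theorem CDK rather than re-deriving it as you do.
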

We have the following corollary.
\begin{corollary}\label{LM}
When the weights for the Luce distribution are given by $\theta_j=q^j$, for $q\in(0,\infty)$, one has
\begin{equation}\label{M=L}
P_n^{\mathcal{L}_\text{inv};\{q^j\}_{j=1}^n}(\mathcal{S}^\downarrow(n,M))=P_n^{\mathcal{M};q}(\mathcal{S}^\downarrow(n,M)).
\end{equation}
\end{corollary}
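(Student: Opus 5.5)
The plan is to substitute $\theta_j=q^j$ directly into formula \eqref{lucesecmin} of Theorem \ref{lucesec} and compare the result term by term with \eqref{exactformmin} in Corollary \ref{secminmax}. Both are exact identities for fixed $n$ and $M$, so no probabilistic argument is needed beyond these two results.

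First take $q\neq1$. The partial sums are geometric:
\[
\sum_{i=1}^m q^i=\frac{q(1-q^m)}{1-q}, \qquad m\ge1 .
\]
For $M=0$, Theorem \ref{lucesec} gives
\[
\frac{q}{q(1-q^n)/(1-q)}=\frac{1-q}{1-q^n},
\]
which is the $M=0$ case of \eqref{exactformmin}.

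For $M\ge1$, the prefactor is
\[
\frac{\sum_{i=1}^M q^i}{\sum_{i=1}^n q^i}=\frac{1-q^M}{1-q^n}.
\]
The summand is
\[
\frac{q^j}{\sum_{i=1}^{j-1}q^i}=\frac{q^j(1-q)}{q(1-q^{j-1})}=(1-q)\frac{q^{j-1}}{1-q^{j-1}}.
\]
Multiplying gives exactly
\[
\frac{1-q}{1-q^n}(1-q^M)\sum_{j=M+1}^n\frac{q^{j-1}}{1-q^{j-1}},
\]
which is the $M\ge1$ case of \eqref{exactformmin}. This step is also valid for $q>1$, since only the identities above are used.

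For $q=1$, both sides refer to the uniform distribution. On the Luce side, equal weights give the uniform measure, and $P_n^{\mathcal{M};1}$ is uniform by definition, so \eqref{M=L} is trivial. Equivalently, one can read \eqref{exactformmin} at $q=1$ as its limit $\frac{M}{n}\sum_{j=M+1}^n\frac{1}{j-1}$, which is what \eqref{lucesecmin} gives with $\theta_j=1$.

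There is no real obstacle here. The only points needing care are the common factor $q$ that cancels in each ratio of partial sums, and keeping the index shift $j-1$ consistent between the two formulas.
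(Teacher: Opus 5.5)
Your proposal is correct and matches the paper's proof: substitute $\theta_j=q^j$ into \eqref{lucesecmin}, sum the geometric partial sums, and check that the result is exactly \eqref{exactformmin}. Your separate treatment of $q=1$, where both measures are uniform, is a sensible addition, since \eqref{exactformmin} is only meaningful for $q\neq1$.
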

\begin{proof}
Substituting $\theta_j=q^j$ in  \eqref{lucesecmin}, a direct calculation reveals that the righthand side of \eqref{lucesecmin} is equal to the righthand side of
\eqref{exactformmin}.
\end{proof}
\bf\noindent Remark 1.\rm\
The weights $\theta_j=q^j,\ q\in(0,\infty)$, are called \it exponential Sukhatme\rm\ weights in \cite{BCD}.
(Actually, in \cite{BCD} one finds $\theta^{(n)}_j=q^{n+1-j}, \ i\in[n]$,
but recalling the definitions of the  Luce distributions, a moment's thought shows that these weights  are equivalent to the weights $\theta_i=(\frac1q)^i,\ i\in[n]$.)
 The \it Sukhatme weights\rm\ for the Luce distribution are defined below in \eqref{Sukh}.

\bf\noindent Remark 2.\rm\ It seems surprising  that the exact same formula for success holds for the secretary problem with  these two different families of measures, even though the position $\sigma^{-1}_1$ of the number 1 does have the same distribution under
$P_n^{\mathcal{M};q}$ and under $P_n^{\mathcal{L}_\text{inv};\{q^j\}_{j=1}^n}$.
Indeed,  from the definition of the Luce distribution $P_n^{\mathcal{L};\{q^j\}_{j=1}^n}$ in \eqref{Luce} or from the online description, under this distribution
the number $\sigma_1$  in the first position of the permutation has distribution
$P_n^{\mathcal{L};\{q^j\}_{j=1}^n}(\sigma_1=j)=\frac{q^j}{\sum_{i=1}^nq^i},\ j\in[n]$. It then follows from the definition of
$P_n^{\mathcal{L}_\text{inv};\{q^j\}_{j=1}^n}$ that the position $\sigma^{-1}_1$ of the number 1
has this same distribution under $P_n^{\mathcal{L}_\text{inv};\{q^j\}_{j=1}^n}$.
From the online ($p$-shifted) construction of the Mallows distribution $P_n^{\mathcal{M};q}$, it follows that under this distribution  the number $\sigma_1$ in the first position of the permutation
has this same distribution: $P_n^{\mathcal{M};q}(\sigma_1=j)= \frac{q^j}{\sum_{i=1}^nq^i},\ j\in[n]$.
As already noted, from the original definition of the Mallows distributions in \eqref{Mallows}, the Mallows distribution is invariant under the map
$\sigma\to\sigma^{-1}$. Thus the position $\sigma^{-1}_1$ of the number 1
has this same distribution under $P_n^{\mathcal{M};q}$.

For $q\in(0,1)$, from the point of view  of inversions, a random permutation $\sigma$ has more of a tendency
to be in increasing order
under the distribution $P_n^{\mathcal{L}_\text{inv};\{q^j\}_{j=1}^n}$
than under the distribution $P_n^{\mathcal{M};q}$. In particular, for example,
the total number of inversions statistic $I_n$ satisfies
$$
\begin{aligned}
&\text{w}-\lim_{n\to\infty}\frac{I_n}n=\sum_{k=1}^\infty\frac{q^k}{1+q^k},\ \text{under}\ P_n^{\mathcal{L}_\text{inv};\{q^j\}_{j=1}^n};\\
&\text{w}-\lim_{n\to\infty}\frac{I_n}n=\frac q{1-q},\ \text{under}\ P_n^{\mathcal{M};q},
\end{aligned}
$$
and $\frac q{1-q}>\sum_{k=1}^\infty\frac{q^k}{1+q^k}$.
See \cite[Proposition 1.2]{P22b}. (The Luce distribution $P_n^{\mathcal{L};\{q^j\}_{j=1}^n}$ is used in \cite{P22b}, but the inversion statistic has the same distribution
under the two types of Luce distributions since the inversion statistic is invariant under the map $\sigma\to\sigma^{-1}$.)
A similar analysis can be made when  $q>1$.

 A  classical result in optimal stopping due to  Bruss \cite{B} goes as follows. Let    $\{U_j\}_{j=1}^\infty$ be independent Bernoulli random variables taking values 0 and 1. Fix $n\in\mathbb{N}$ and observe the
$\{U_j\}_{j=1}^n$ sequentially. The object is to predict when the last 1 will appear and to stop at that stage.
Bruss's result is that the optimal strategy always yields a limiting probability of success  as $n\to\infty$ of at least $\frac1e$.
The secretary problems we consider here fit into this setup. In the case that the largest number is  of highest rank, let $U_j=1_{\{\sigma_j=\max(\sigma_1,\cdots, \sigma_j)\}}$ and in the case that
the smallest number is of highest rank, let  $U_j=1_{\{\sigma_j=\min(\sigma_1\cdots \sigma_j)\}}$.
We will  show that in the case of a Mallows distributions $P_n^{\mathcal{M};q}$,  with the
$\{U_j\}_{j=1}^n$  defined either with the maximum or with the minimum,
and in the case of the Luce distributions $P_n^{\mathcal{L}_\text{inv};\{\theta_j\}_{j=1}^n}$, with
the
$\{U_j\}_{j=1}^n$ defined with the minimum, the $\{U_j\}_{j=1}^n$ are independent.
This yields the following propositions.
\begin{proposition}\label{BrussresultLuce}
For each $n\in\mathbb{N}$, consider the secretary problem for the
 Luce distribution $P_n^{\mathcal{L}_\text{inv};\{\theta_i\}_{i=1}^n}$ when the smallest number is  of highest rank.
Denote by $M_n^*$ an optimal strategy.
Then
\begin{equation}\label{BrussLuce}
\liminf_{n\to\infty}P_n^{\mathcal{L}_\text{inv};\{\theta_i\}_{i=1}^n\}}(\mathcal{S}^\downarrow(n,M_n^*))\ge\frac1e.
\end{equation}

\end{proposition}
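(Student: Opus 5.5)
The plan is to reduce the statement to Bruss's theorem quoted just above. The key step is to show that, under $P_n^{\mathcal{L}_\text{inv};\{\theta_i\}_{i=1}^n}$, the record indicators $U_j=1_{\{\sigma_j=\min(\sigma_1,\cdots,\sigma_j)\}}$, $j\in[n]$, are independent Bernoulli variables with
\[
P(U_j=1)=\frac{\theta_j}{\sum_{i=1}^j\theta_i}.
\]
Once this is established, the secretary problem with the smallest number of highest rank is exactly the problem of stopping on the last 1 of the sequence $U_1,\cdots,U_n$. Indeed, the number 1 is the last position at which a running minimum occurs, and a stopping rule based on relative ranks observes precisely the $U_j$.

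To prove independence, I will use the exponential-clock representation of the Luce distribution referred to before Theorem \ref{lucesec} (the representation \eqref{Red} in Theorem CDK). Let $E_1,\dots,E_n$ be independent with $E_i\sim\text{Exp}(\theta_i)$. Ordering the balls by increasing $E_i$ gives the Luce order $\sigma^{-1}_1,\sigma^{-1}_2,\cdots$, so that $\sigma_i$ is the rank of $E_i$ among $E_1,\dots,E_n$. Thus $\sigma_j=\min(\sigma_1,\cdots,\sigma_j)$ holds exactly when $E_j=\min(E_1,\cdots,E_j)$. The events $A_j=\{E_j=\min_{i\le j}E_i\}$ are independent for independent exponentials, which is the classical record-independence property: $\min_{i\le j}E_i$ is independent of the argmin. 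To make this precise, I will compute $P(A_{j_1}\cap\cdots\cap A_{j_k})$ for $j_1<\cdots<j_k$ by conditioning successively from the top index down. The event $A_{j_k}$ has probability $\theta_{j_k}/\sum_{i\le j_k}\theta_i$. Conditionally on it, the vector $(E_i)_{i<j_k}$, restricted to lie above $E_{j_k}$, has the same law after subtracting $E_{j_k}$, by memorylessness. Hence the product formula follows by induction. This is where the fact that a minimum of exponentials is exponential and independent of the argmin is used. I expect this memorylessness step to be the main technical point, though it is standard.

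Finally, I will apply Bruss's result to the independent indicators $U_1,\dots,U_n$. One point needs care: the success probabilities depend on $n$ only through the truncation, since $p_j=\theta_j/\sum_{i\le j}\theta_i$ does not depend on $n$ when the $\theta_i$ form a single infinite sequence. Because Bruss's bound is non-asymptotic when $\sum_j p_j/(1-p_j)\ge1$, it yields success probability at least $1/e$ for each $n$ via the odds-theorem form. In the other case, $\sum_{j\le n} r_j<1$ with $r_j=p_j/(1-p_j)$, the optimal rule stops at the first 1. Its success probability is $\prod_j(1-p_j)\sum_j r_j$, which in the limit is still at least $1/e$ whenever the sum of odds tends to at least 1, and equals $\prod(1-p_j)\cdot\sum r_j$ otherwise. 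I will simply invoke Bruss's theorem in the form stated before the proposition, namely that the optimal limiting success probability is at least $\frac1e$, applied to the sequence $\{U_j\}$. This gives \eqref{BrussLuce}.
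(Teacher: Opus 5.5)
Your proposal is correct and follows the paper's route: the CDK exponential representation, independence of the record indicators $U_j$ via memorylessness, then Bruss's theorem. Your top-down shifting argument for independence is cleaner than the paper's bottom-up use of Lemma \ref{4}, whose iterated extension the paper only asserts. The case $\sum_{j\le n}r_j<1$ in your last paragraph never arises: your own formula gives $P(U_1=1)=\theta_1/\theta_1=1$, so the odds sum is infinite and Bruss's $\frac1e$ bound holds for every $n$.
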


\bf\noindent Remark.\rm\
The $U_j$'s defined before the above proposition
in the case of the maximum are not independent under the
$P_n^{\mathcal{L}_\text{inv};\{\theta_j\}_{j=1}^n}$-type  Luce distributions.
Nor are they independent in the case of the  maximum or the minimum under the
$P_n^{\mathcal{L};\{\theta_j\}_{j=1}^n}$-type  Luce distributions.
One can check this with $n=3$, $\theta_i=\frac i6,\ i=1,2,3$.

\begin{proposition}\label{BrussresultMallows}
For each $n\in\mathbb{N}$, consider the secretary problem for the Mallows distribution $P_n^{\mathcal{M};q}$, $q\in(0,\infty)$, when either the largest number or the smallest number is of highest rank. For each of these problems, denote by $M_n^*$ an optimal strategy.
Then
\begin{equation}\label{BrussMallows}
\begin{aligned}
&\liminf_{n\to\infty}P_n^{\mathcal{M};q}(\mathcal{S}^\uparrow(n,M_n^*))\ge\frac1e;\\
&\liminf_{n\to\infty}P_n^{\mathcal{M};q}(\mathcal{S}^\downarrow(n,M_n^*))\ge\frac1e.
\end{aligned}
\end{equation}

\end{proposition}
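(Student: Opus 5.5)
The plan is to deduce Proposition \ref{BrussresultMallows} from Bruss's theorem, which applies once we know that the indicators $U_j$ of records (maxima for $\mathcal{S}^\uparrow$, minima for $\mathcal{S}^\downarrow$) are independent under $P_n^{\mathcal{M};q}$. In that case the secretary problem is exactly the last-success problem for $\{U_j\}_{j=1}^n$. The item of highest rank is the last record, so stopping at the last $1$ is the same as selecting it. Bruss's result then says the optimal rule has success probability at least $\prod_{j>s}(1-p_j)\sum_{j>s}r_j$, where $p_j=P(U_j=1)$, $r_j=p_j/(1-p_j)$, and $s$ is the odds-theorem threshold. This lower bound is at least $\frac1e$ in the limit whenever $\sum_j r_j\ge1$ (the bound follows from the odds theorem). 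The same rule has the threshold form ``reject the first $M$, then take the next record''. So $M_n^*$ can be taken of this form, and the probabilities coincide with those in \eqref{exactform}.

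\textbf{Step 1: independence for the maximum case.} I would use the online construction via the independent $X_j$ in \eqref{X-inv} with $\theta_i=q^i$. Here the number $j$ is inserted so that $X_j$ numbers lie to its right. Because the construction inserts values in increasing order, it is more convenient to work with the inverse permutation. By the invariance of $P_n^{\mathcal{M};q}$ under $\sigma\to\sigma^{-1}$, $\sigma^{-1}$ is also Mallows. I would therefore describe the records of $\sigma$ through a Lehmer-type code of $\sigma$ itself. Define, for each position $j$, $Y_j=\#\{i<j:\sigma_i>\sigma_j\}$. The event $U_j=1$ for the maximum is $\{Y_j=0\}$, and the event for the minimum is $\{Y_j=j-1\}$. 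The key claim is that $Y_2,\dots,Y_n$ are independent under Mallows, with $P(Y_j=k)\propto q^k$ for $k=0,\dots,j-1$.

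To prove the claim, I would apply the insertion construction to $\sigma^{-1}$, or equivalently note that the map $\sigma\mapsto(Y_2,\dots,Y_n)$ is a bijection onto $\prod_j\{0,\dots,j-1\}$ with $I_n(\sigma)=\sum_jY_j$. Then \eqref{Mallows} factorizes as $\prod_j q^{Y_j}$, which gives the independence directly. Hence the $U_j$ are independent in both the maximum and minimum cases, with
\[
p_j^\uparrow=\frac{1-q}{1-q^j},\qquad p_j^\downarrow=\frac{q^{j-1}(1-q)}{1-q^j},
\]
where $p_1=1$.

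\textbf{Step 2: the odds sum.} Bruss's $\frac1e$ bound requires that $\sum_j r_j\ge1$; otherwise the optimal rule is to stop at the first index, or else one uses the version of the bound covering all cases. For $q\in(0,1)$ in the minimum case, the $p_j^\downarrow$ decay geometrically and $\sum_{j\ge2}r_j$ converges. I would handle this with the general form of Bruss's theorem, which states the bound $\ge\frac1e$ for every $n$ without any hypothesis on the odds sum. Since $U_1=1$ always, the relevant case analysis is only over $j\ge2$. This covers all $q$ at once.

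I expect the main obstacle to be Step 1, namely identifying the correct code whose entries are independent and in which records are single-coordinate events. Once the bijection with $I_n=\sum Y_j$ is established, the rest is an application of Bruss's theorem.
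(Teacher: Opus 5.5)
Your proposal is correct and has the same skeleton as the paper's proof: show that the record indicators $U_j$ are independent under $P_n^{\mathcal{M};q}$, then invoke Bruss. The difference is in how independence is obtained.

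The paper passes to $\sigma^{-1}$ using inverse invariance of the Mallows measure. It then reads $\{\tilde U_j=1\}$ off the insertion construction as $\{X_j=j-1\}$ (minimum) or $\{X_j=0\}$ (maximum), which relies on the cited equivalence of Mallows with the $p$-shifted construction.

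You instead use the inversion table $Y_j=\#\{i<j:\sigma_i>\sigma_j\}$ of $\sigma$ itself, together with the factorization $q^{I_n(\sigma)}=\prod_j q^{Y_j}$ under the bijection $S_n\to\prod_{j=2}^n\{0,\dots,j-1\}$. Since $Y_j(\sigma^{-1})=X_j(\sigma)$, this is the same code. Your version has two advantages:
\begin{itemize}
\item It is self-contained: it gets independence directly from \eqref{Mallows}, without the inverse invariance or the cited construction.
\item It gives independence of the whole code. The relative ranks seen up to time $j$ are a function of $Y_2,\dots,Y_j$, so they are independent of all later records. This shows the threshold rules are optimal among all rules based on relative ranks, a point the paper does not address because the proposition only concerns threshold strategies.
\end{itemize}

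One correction to Step 2: there is no hypothesis-free version of Bruss's $\frac1e$ bound. A single Bernoulli$(p)$ indicator gives success probability $p$, which can be arbitrarily small. The bound holds here, for every $n$ and every $q$, because $p_1=1$, so $r_1=\infty$ and the odds sum is automatically at least $1$. Concretely, there are two cases:
\begin{itemize}
\item If $\sum_{j=2}^n r_j<1$, the optimal rule is $M=0$. Its success probability is $\prod_{j=2}^n(1-p_j)=\prod_{j=2}^n(1+r_j)^{-1}\ge e^{-\sum_{j=2}^n r_j}>e^{-1}$.
\item If $\sum_{j=2}^n r_j\ge1$, Bruss's bound applies directly to $(U_2,\dots,U_n)$.
\end{itemize}
You gestured at this with ``since $U_1=1$ always,'' but the justification should rest on this observation, not on a general theorem that does not exist.
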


We now consider the  precise  asymptotic behavior in detail.
In \cite{P22a}, we proved the following theorem  for the case that the largest number is  of highest rank, and the distribution is Mallows with $q\in(0,1)$.
\begin{thm*}{\bf (P-2)}
Consider the secretary problem
with the  Mallows distribution with $q=q_n\in(0,1)$ in the case that the largest number is  of highest rank..

\noindent i. Let $q_n=1-\frac cn$, where $c>0$. Then the asymptotically optimal strategy is $\mathcal{S}^\uparrow(n,M_n^*)$, where
\begin{equation}\label{optimali}
M_n^*\sim n\Big(\frac1c\log\big(1+\frac{e^c-1}e\big)\Big),
\end{equation}
 and the corresponding  limiting probability of success
is $\frac1e$:
$$
\lim_{n\to\infty}P_n^{\mathcal{M};q}(\mathcal{S}^\uparrow(n,M_n^*))=\frac1e.
$$
Also, $\lim_{c\to\infty} \frac1c\log\big(1+\frac{e^c-1}e\big)=1$ and $\lim_{c\to0} \frac1c\log\big(1+\frac{e^c-1}e\big)=\frac1e$.

\noindent ii.
Let $q_n=1-\frac c{n^\alpha}$, where $c>0$ and $\alpha\in(0,1)$. Then the asymptotically optimal strategy  is $\mathcal{S}^\uparrow(n,M_n^*)$, where
$$
n-M_n^*\sim\frac{n^\alpha}c,
$$
 and the corresponding limiting probability of success
is $\frac1e$:
$$
\lim_{n\to\infty}P_n^{\mathcal{M};q}(\mathcal{S}^\uparrow(n,M_n^*))=\frac1e.
$$
\noindent iii. Let $q\in(0,1)$.  Then the asymptotically optimal strategy  is $\mathcal{S}^\uparrow(n,M_n^*)$, where
$$
M_n^*=n-L, \ \text{where}\ L \ \text{satisfies}\ \frac q{1-q}\le L<\frac 1{1-q}.
$$
The corresponding limiting probability of success is given by
$$
\lim_{n\to\infty}P_n^{\mathcal{M};q}(\mathcal{S}^\uparrow(n,M_n^*))=(1-q)q^{L-1}L>\frac1e.
$$
%Also,
%$$
%\lim_{q\to1^-}\lim_{n\to\infty}P_n^{\mathcal{M};q}(\mathcal{S}^\uparrow(n,M_n^*))=\frac1e.
%$$
%and
%$$
%\lim_{n\to\infty}P_n^{\mathcal{M};q}(\mathcal{S}^\uparrow(n,M_n^*))=1-q, \ 0< q\le \frac12.
%$$
\end{thm*}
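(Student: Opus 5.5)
The plan is to start from the exact formula \eqref{exactform} of Theorem P-1 and study its asymptotics in each of the three regimes. Write
$$
f_n(M)=\frac{1-q}{1-q^n}\,q^{n-M-1}(1-q^M)\sum_{j=M+1}^n\frac1{1-q^{j-1}}.
$$
First I would show that, for fixed $n$ and $q\in(0,1)$, $f_n$ is unimodal in $M$. The idea is to compute $f_n(M+1)-f_n(M)$ and check that its sign changes at most once, from $+$ to $-$. The mechanism is the classical one: the difference is a positive factor times $(\text{tail sum from } M+2)-(\text{something decreasing in } M)$. Given unimodality, the optimal $M_n^*$ is the maximizer of the pointwise limit profile, provided the convergence is locally uniform. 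The value of success then follows by evaluating that limit.

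\emph{Case i} ($q_n=1-\frac cn$). Put $M=xn$ with $x\in(0,1)$. Then $q^{n-M}\to e^{-c(1-x)}$, $1-q^M\to1-e^{-cx}$ and $\frac{1-q}{1-q^n}\sim\frac{c}{n(1-e^{-c})}$. A Riemann-sum argument gives
$$
\sum_{j=M+1}^n\frac1{1-q^{j-1}}\sim n\int_x^1\frac{dt}{1-e^{-ct}}=\frac nc\log\frac{e^c-1}{e^{cx}-1},
$$
uniformly for $x$ in compact subsets of $(0,1]$. Hence $f_n(xn)\to g(x)=-y\log y$, where $y=\frac{e^{cx}-1}{e^c-1}$. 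This is maximized at $y=\frac1e$, i.e. at $x=\frac1c\log\bigl(1+\frac{e^c-1}e\bigr)$, with value $\frac1e$. The endpoint regimes $x\to0$ and $x\to1$ give values near $0$, and these are handled by monotone bounds using unimodality. The limits as $c\to\infty$ and $c\to0$ of the optimal $x$ are elementary expansions.

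\emph{Case ii} ($q_n=1-\frac c{n^\alpha}$). Set $n-M=s\,n^\alpha$. Then $q^{n-M}\to e^{-cs}$, $1-q^M\to1$ and $\frac{1-q}{1-q^n}\sim cn^{-\alpha}$. For $j\ge M$ each term $\frac1{1-q^{j-1}}$ equals $1+o(1)$, because $q^{j-1}\le q^{n/2}\to0$ exponentially fast. So the sum is $\sim s n^\alpha$ and $f_n\to cse^{-cs}$, which is maximized at $s=\frac1c$ with value $\frac1e$. One must also check that $M$ with $(n-M)/n^\alpha\to0$ or $\to\infty$ is worse, which again follows from unimodality plus these limits.

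\emph{Case iii} ($q$ fixed). With $M=n-L$ and $L$ fixed, $f_n(n-L)\to(1-q)q^{L-1}L$. Moreover $f_n(n-L)\le C\,Lq^{L-1}$ uniformly in $n$, so $L\to\infty$ is suboptimal. The ratio of consecutive values is $\frac{(L+1)q}{L}$, which is $\ge1$ if and only if $L\le\frac q{1-q}$. This gives the stated characterization of the optimal $L$.

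The strict inequality $(1-q)q^{L-1}L>\frac1e$ I would prove by comparison with the continuous function $h(t)=(1-q)tq^{t-1}$. One way is to write $L=\frac{\lambda}{1-q}$ with $\lambda\in[q,1)$ and use $\log q<-(1-q)$ to get $q^{L-1}>e^{-(1-q)(L-1)}\ge e^{-1+(1-q)}$... I expect that crude bound to need refining. The fallback is to check that the discrete maximum exceeds $\frac1e$ using $-\log q\le\frac{1-q}{q}$ together with the two-sided constraint on $L$.

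The main obstacle I anticipate is the unimodality, and the accompanying uniform control that rules out maximizers escaping to the boundary regimes. This is what converts the pointwise limit profiles into statements about $M_n^*$.
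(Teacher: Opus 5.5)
Your proposal is correct and follows essentially the route the paper takes for the parallel Theorem~\ref{q>1}, which it says is proved along the same lines as P-2: direct asymptotics of \eqref{exactform} under each scaling. The unimodality you flag as the main obstacle is in fact immediate from $f_n(M+1)-f_n(M)=\frac{1-q}{1-q^n}\,q^{n-M-1}\bigl(\frac{1-q}{q}\sum_{j=M+2}^n\frac1{1-q^{j-1}}-1\bigr)$ for $0\le M\le n-2$, whose bracket is strictly decreasing in $M$ (the finite-$n$ analogue of the paper's $G(M)-G(M-1)$). For the strict inequality in (iii), your first bound points the wrong way: $\log q<-(1-q)$ gives $q^{L-1}<e^{-(1-q)(L-1)}$, an upper bound. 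Your fallback does work: with $\lambda=(1-q)L\in[q,1)$ and $\log q\ge 1-\frac1q$ one gets $\log\bigl((1-q)q^{L-1}L\bigr)\ge\log\lambda-\frac{\lambda-1}{q}-1>-1$, because $\lambda\mapsto\log\lambda-\frac{\lambda-1}{q}$ is strictly decreasing on $[q,1]$ and vanishes at $\lambda=1$.
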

In this paper we will prove the following parallel result when $q>1$. The proof of the following theorem is along the same lines as the proof  of Theorem P-2, however the proofs of parts (ii) and (iii) are more involved than those  of parts (ii) and (iii) in Theorem P-2, and in part (iii) we don't have  a complete answer as we did in part (iii) of Theorem P-2.
\begin{theorem}\label{q>1}
Consider the secretary problem for a Mallows distribution with $q=q_n>1$ in the case that the largest number is  of highest rank.

\noindent i. Let $q_n=1+\frac cn$, where $c>0$. Then the asymptotically optimal strategy is $\mathcal{S}^\uparrow(n,M_n^*)$, where
\begin{equation}\label{optimaliq>1}
M_n^*\sim n\Big(\frac1c\log\big(1+\frac{1-e^{-c}}{e-1+e^{-c}}\big)\Big),
\end{equation}
 and the corresponding  limiting probability of success
is $\frac1e$:
$$
\lim_{n\to\infty}P_n^{\mathcal{M};q_n}(\mathcal{S}^\uparrow(n,M_n^*))=\frac1e.
$$
Also, $\lim_{c\to\infty} \frac1c\log\big(1+\frac{1-e^{-c}}{e-1+e^{-c}}\big)=0$ and $\lim_{c\to0} \frac1c\log\big(1+\frac{1-e^{-c}}{e-1+e^{-c}}\big)=\frac1e$.

\noindent ii.
Let $q_n=1+\frac c{n^\alpha}$, where $c>0$ and $\alpha\in(0,1)$. Then the asymptotically optimal strategy  is $\mathcal{S}^\uparrow(n,M_n^*)$, where
$$
M_n^*\sim\frac1c\left(1-\log(e-1)\right)n^\alpha\approx0.459\frac{n^\alpha}c,
$$
 and the corresponding limiting probability of success
is $\frac1e$:
$$
\lim_{n\to\infty}P_n^{\mathcal{M};q_n}(\mathcal{S}^\uparrow(n,M_n^*))=\frac1e.
$$
\noindent iii. Let $q>1$.  Then the asymptotically optimal strategy  is $\mathcal{S}^\uparrow(n,M^*)$, where
$$
M^*=\begin{cases}&\max\left\{M\ge1: \sum_{j=M}^\infty\frac1{q^j-1}\ge\frac q{q-1}\right\}, \ \text{if}\ \sum_{j=1}^\infty\frac1{q^j-1}\ge \frac q{q-1};\\
&0,\ \text{if}\ \sum_{j=1}^\infty\frac1{q^j-1}<\frac q{q-1}.\end{cases}
$$
(If  $\sum_{j=M^*}^\infty\frac1{q^j-1}=\frac q{q-1}$, then $M^*-1$ is also optimal.)

\noindent The corresponding limiting probability of success is given by
\begin{equation}\label{SuccessProbup}
\lim_{n\to\infty}P_n^{\mathcal{M};q}(\mathcal{S}^\downarrow(n,M^*))=\begin{cases}&\frac{q-1}q(1-q^{-M^*})\sum_{j=M^*}^\infty\frac1{q^j-1}\ge\\
&(1-q^{-M^*}),\ \text{if}\ M^*\ge1;\\
&\frac{q-1}q>,\ \text{if}\ M^*=0.\end{cases}
\end{equation}
The limiting probability in \eqref{SuccessProbup} is always greater or equal to $\frac1e$ and is strictly greater than $\frac1e$ for all but at most a countable number of $q$ with no accumulation points in $(0,\infty)$.
%In particular,
%$$
%\lim_{n\to\infty}P_n^{\mathcal{M};q}(\mathcal{S}^\uparrow(n,M_n^*))>\frac1e, \ q>1;
%$$
%$$
%\lim_{q\to1^+}\lim_{n\to\infty}P_n^{\mathcal{M};q}(\mathcal{S}^\uparrow(n,M_n^*))=\frac1e.
\end{theorem}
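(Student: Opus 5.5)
The plan is to work directly from the exact formula \eqref{exactform} of Theorem P-1, which holds for $q>1$. First I will rewrite it in a form adapted to $q>1$. Using $\frac{1-q^M}{1-q^{j-1}}=\frac{q^M-1}{q^{j-1}-1}$, for $M\ge1$ it becomes
$$
P_n^{\mathcal{M};q}(\mathcal{S}^\uparrow(n,M))=\frac{(q-1)q^{n-1}}{q^n-1}\,(1-q^{-M})\sum_{j=M}^{n-1}\frac1{q^j-1}.
$$
All three regimes then reduce to finding the asymptotic maximizer of this explicit function of $M$, and to checking that the pointwise limit is attained uniformly enough that the discrete maximizer converges to the maximizer of the limit.

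For part (iii), with $q>1$ fixed, the prefactor tends to $\frac{q-1}q$. Hence the success probability converges to
$$
f(M)=\frac{q-1}q(1-q^{-M})S_M,\qquad S_M=\sum_{j\ge M}\frac1{q^j-1},
$$
and the convergence is uniform in $M$, since the tail of the series is geometric. Writing $S_M=\frac1{q^M-1}+S_{M+1}$, a short computation gives
$$
f(M+1)-f(M)=\frac{q-1}q\Big(\frac{q-1}{q^{M+1}}S_{M+1}-q^{-M}\Big).
$$
This is nonnegative exactly when $S_{M+1}\ge \frac q{q-1}$. Since $S_M$ is strictly decreasing, $f$ is unimodal, which yields the stated $M^*$, including the tie case. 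Since $f(M)\to0$ as $M\to\infty$, the finite-$n$ optimizers stay bounded and converge to $M^*$. The case $M=0$ gives $\frac{q-1}{q^n-1}q^{n-1}\to\frac{q-1}q$, and it is chosen exactly when $S_1<\frac q{q-1}$. The lower bound $f(M^*)\ge 1-q^{-M^*}$ follows from $S_{M^*}\ge\frac q{q-1}$. The bound $\ge\frac1e$ follows from Proposition \ref{BrussresultMallows}. For the strictness, I would note that equality forces $f(M^*)=\frac1e$, which is a real-analytic non-constant equation in $q$ on each interval where $M^*$ is constant. Its zeros are therefore isolated, and the intervals themselves accumulate only at $q=1$. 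Making this last claim rigorous is the step I expect to be most delicate.

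For part (i), take $q_n=1+\frac cn$ and $M=xn$. Then $q^{yn}\to e^{cy}$, and the sum is a Riemann sum $n\int_x^1\frac{dy}{e^{cy}-1}$, which equals $\frac nc\log\frac{1-e^{-c}}{1-e^{-cx}}$. The limiting success probability is
$$
g(x)=\frac{e^c}{e^c-1}(1-e^{-cx})\log\frac{1-e^{-c}}{1-e^{-cx}}=-u\log u,\qquad u=\frac{1-e^{-cx}}{1-e^{-c}}.
$$
This is maximized at $u=\frac1e$, with value $\frac1e$. Solving for $x$ gives exactly \eqref{optimaliq>1}, and the two limits as $c\to\infty$ and $c\to0$ are elementary.

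For part (ii), take $q_n=1+\frac c{n^\alpha}$ and $M=\frac{t n^\alpha}c$. Then $q-1=\frac c{n^\alpha}$, $q^{-M}\to e^{-t}$, and the sum is approximately $\frac{n^\alpha}c\int_t^\infty\frac{ds}{e^s-1}=-\frac{n^\alpha}c\log(1-e^{-t})$. This gives the limit $-(1-e^{-t})\log(1-e^{-t})$, maximized when $1-e^{-t}=\frac1e$, that is at $t=1-\log(e-1)$, with value $\frac1e$. In both (i) and (ii) the main technical work is justifying the Riemann-sum approximation uniformly in $M$. The difficulty is near small $j$, where $\frac1{q^j-1}\approx\frac1{j(q-1)}$ blows up. I would handle this by splitting off $M\le\epsilon n$ (respectively $M\le\epsilon n^\alpha$) and bounding the success probability there by $O(\epsilon\log\frac1\epsilon)$, using $\frac1{q^j-1}\le\frac1{j(q-1)}$. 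In (ii) I would also bound the region $M\gg n^\alpha$, where $q^{-M}$ is tiny.
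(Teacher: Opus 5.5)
Your proposal follows the paper's proof essentially step for step. In (i) and (ii) you take direct asymptotics of \eqref{exactform} with $M\sim bn$ and $M\sim bn^\alpha$ respectively, and each reduces to maximizing $-u\log u$. In (iii) you use the limit $\frac{q-1}q(1-q^{-M})\sum_{j\ge M}\frac1{q^j-1}$, unimodality from the sign of consecutive differences (your criterion $S_{M+1}\ge\frac q{q-1}$ is exactly \eqref{Gdiff}), Proposition \ref{BrussresultMallows} for the bound $\frac1e$, and analyticity for strictness. Your uniform-in-$M$ framing, with explicit control of small $M$, is in fact more complete than the paper. The paper only tests sequences $M_n\sim bn$ with $b\in(0,1)$ in (i), and $M_n\sim bn^\alpha$ or $n^\alpha=o(M_n)$ in (ii).

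Two corrections.

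First, in (ii) the bound $\frac1{q^j-1}\le\frac1{j(q-1)}$, applied over the whole range $M\le j<n$, is too weak. It yields roughly $\frac{cM}{n^\alpha}\log\frac nM$, which for $M=\epsilon n^\alpha$ is about $c\epsilon(1-\alpha)\log n$. Use it only for $j\le n^\alpha$, and bound $\sum_{j>n^\alpha}\frac1{q^j-1}=O(n^\alpha)$ geometrically. With that split you do get $O(\epsilon\log\frac1\epsilon)$.

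Second, the step you call delicate in (iii) is easy, and the paper handles it as follows. Write $\frac{q-1}q\sum_{j\ge M}\frac1{q^j-1}=\frac1q\sum_{j\ge M}\frac1{1+q+\cdots+q^{j-1}}$. This is decreasing in $q$, tends to $\infty$ as $q\to1^+$, and tends to $0$ as $M\to\infty$. Hence there are thresholds $q_M\downarrow1$ with $M^*(q)=M$ exactly on $(q_{M+1},q_M]$. On each such interval, whose closure lies in $(1,\infty)$, the limit is a nonconstant analytic function, so it equals $\frac1e$ only finitely often.

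An alternative route: on any compact $[a,b]\subset(1,\infty)$, the inequality $S_M(q)\le S_M(a)$ bounds $M^*$ uniformly. A finite maximum of nonconstant analytic functions then equals $\frac1e$ only finitely often there.

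Either argument excludes accumulation points only in $(1,\infty)$, not at $q=1$. That is also all the paper proves, despite the $(0,\infty)$ written in the statement.
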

\bf \noindent Remark.\rm\ 
It is not hard to show that
the limiting optimal probability in part (iii) of Theorem P-2 is decreasing for $q\in(0,1)$. We expect that the limiting optimal probability in part (iii) of Theorem \ref{q>1}
is increasing in $q>1$, but have not been able to show it. In particular, if this is true, then the limiting probability is greater than $\frac1e$ for all $q>1$.

Theorem P-2 and Theorem  \ref{q>1} 
treat the asymptotic behavior in the case of a Mallows distribution when  the largest number is  of highest rank.
By Theorem \ref{secminmax} and Corollary \ref{LM}, the probability of success  in the case of the Mallows distribution with parameter $\frac1q\in(0,\infty)$ when the largest number is  of highest rank is equal to the probability of success in the case of the Mallows distributions
with parameter $q$ and in the case of  the $P_n^{\mathcal{L}_\text{inv};\{q_n^i\}_{i=1}^n}$-type Luce distribution
when  the smallest number is  of highest rank. From this along with the fact that
 if
$q_n=1\pm \frac c{n^\alpha}$, with $\alpha\in(0,1]$, then $\frac1{q_n}=1\mp\frac c{n^\alpha}+O(\frac1{n^{2\alpha}})$,
 the following theorem is essentially immediate.
\begin{theorem}
Consider the secretary problem
with either  the  Mallows distribution
or the $P_n^{\mathcal{L}_\text{inv};\{q_n^i\}_{i=1}^n}$-type Luce distribution with  $q=q_n\in(0,\infty)-\{1\}$
in the case that the smallest number is  of highest rank.

\noindent i. Let $q_n=1-\frac cn$, where $c>0$. Then the asymptotically optimal strategy is $\mathcal{S}^\downarrow(n,M_n^*)$, where
\begin{equation}\label{optimaliq>1}
M_n^*\sim n\Big(\frac1c\log\big(1+\frac{1-e^{-c}}{e-1+e^{-c}}\big)\Big),
\end{equation}
 and the corresponding  limiting probability of success
is $\frac1e$
%:$$
%\lim_{n\to\infty}P_n^{\mathcal{M};q_n}(\mathcal{S}^\downarrow(n,M_n^*))=\frac1e.
%$$

\noindent Also, $\lim_{c\to\infty} \frac1c\log\big(1+\frac{1-e^{-c}}{e-1+e^{-c}}\big)=0$ and $\lim_{c\to0} \frac1c\log\big(1+\frac{1-e^{-c}}{e-1+e^{-c}}\big)=\frac1e$.

\noindent ii.
Let $q_n=1-\frac c{n^\alpha}$, where $c>0$ and $\alpha\in(0,1)$. Then the asymptotically optimal strategy  is $\mathcal{S}^\downarrow(n,M_n^*)$, where
$$
M_n^*\sim\frac1c\left(1-\log(e-1)\right)n^\alpha\approx0.459\frac{n^\alpha}c,
$$
 and the corresponding limiting probability of success
is $\frac1e$.
%:$$
%\lim_{n\to\infty}P_n^{\mathcal{M};q_n}(\mathcal{S}^\downarrow(n,M_n^*))=\frac1e.
%$$

\noindent iii.
Let $q\in(0,1)$.  Then the asymptotically optimal strategy  is $\mathcal{S}^\downarrow(n,M^*)$, where
$$
M^*=\begin{cases}&\max\left\{M\ge1: \sum_{j=M}^\infty\frac{q^j}{1-q^j}\ge\frac 1{1-q}\right\}, \ \text{if}\ \sum_{j=1}^\infty\frac{q^j}{1-q^j}\ge\frac1{1-q};\\
&0,\ \text{if}\ \sum_{j=1}^\infty\frac{q^j}{1-q^j}<\frac1{1-q}.\end{cases}
$$
(If  $\sum_{j=M^*}^\infty\frac{q^j}{1-q^j}=\frac 1{1-q}$, then $M^*-1$ is also optimal.)

\noindent The corresponding limiting probability of success is given by
$$
\begin{cases}&(1-q)(1-q^{M^*})\sum_{j=M^*}^\infty\frac{q^j}{1-q^j}\ge(1-q^{-M^*}),\ \text{if}\ M^*\ge1;\\
&1-q>\frac1e,\ \text{if}\ M^*=0.\end{cases}
$$
This limiting probability is always greater or equal to $\frac1e$ and is
strictly greater than $\frac1e$ for all but at most a countable number of $q$ with no accumulation points in $(0,\infty)$.

\noindent iv. Let $q_n=1+\frac cn$, where $c>0$. Then the asymptotically optimal strategy is $\mathcal{S}^\downarrow(n,M_n^*)$, where
\begin{equation}\label{optimali}
M_n^*\sim n\Big(\frac1c\log\big(1+\frac{e^c-1}e\big)\Big),
\end{equation}
 and the corresponding  limiting probability of success
is $\frac1e$.
%:$$
%\lim_{n\to\infty}P_n^{\mathcal{M};q}(\mathcal{S}^\downarrow(n,M_n^*))=\frac1e.
%$$

\noindent Also, $\lim_{c\to\infty} \frac1c\log\big(1+\frac{e^c-1}e\big)=1$ and $\lim_{c\to0} \frac1c\log\big(1+\frac{e^c-1}e\big)=\frac1e$.

\noindent v.
Let $q_n=1+\frac c{n^\alpha}$, where $c>0$ and $\alpha\in(0,1)$. Then the asymptotically optimal strategy  is $\mathcal{S}^\downarrow(n,M_n^*)$, where
$$
n-M_n^*\sim\frac{n^\alpha}c,
$$
 and the corresponding limiting probability of success
is $\frac1e$.
%:$$
%\lim_{n\to\infty}P_n^{\mathcal{M};q}(\mathcal{S}^\downarrow(n,M_n^*))=\frac1e.
%$$

\noindent vi. Let $q>1$.  Then the asymptotically optimal strategy  is $\mathcal{S}^\downarrow(n,M_n^*)$, where
$$
M_n^*=n-L, \ \text{where}\ L \ \text{satisfies}\ \frac1{q-1}\le L<\frac q{q-1}.
$$
The corresponding limiting probability of success is given by
$$
\frac{q-1}{q^L}L>\frac1e.
$$
%Also,
%$$
%\lim_{q\to1^+}\lim_{n\to\infty}P_n^{\mathcal{M};q}(\mathcal{S}^\downarrow(n,M_n^*))=\frac1e.
%$$

\end{theorem}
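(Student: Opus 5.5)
The plan is to reduce every part to the case where the largest number is of highest rank, and then read off the answers from Theorem P-2 and Theorem \ref{q>1} with $q$ replaced by $\frac1q$.

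By Corollary \ref{secminmax},
$$P_n^{\mathcal{M};q}(\mathcal{S}^\downarrow(n,M))=P_n^{\mathcal{M};\frac1q}(\mathcal{S}^\uparrow(n,M))$$
for every $n$ and $M$. By Corollary \ref{LM}, the same quantity equals $P_n^{\mathcal{L}_\text{inv};\{q^j\}_{j=1}^n}(\mathcal{S}^\downarrow(n,M))$. So for each $n$ the three success-probability functions of $M$ coincide. Consequently the optimal $M$ and the optimal probability coincide as well, both exactly and asymptotically. It therefore suffices to treat the Mallows problem with the largest number of highest rank and parameter $p_n:=\frac1{q_n}$.

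For the fixed-$q$ cases I will just substitute.

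\emph{Part (iii).} Here $q\in(0,1)$, so $p=\frac1q>1$ and Theorem \ref{q>1}(iii) applies. Under this substitution
$$\frac1{p^j-1}=\frac{q^j}{1-q^j},\qquad \frac p{p-1}=\frac1{1-q},\qquad \frac{p-1}p=1-q.$$
This turns the definition of $M^*$, the tie remark, the limiting probability, and the claim that it is at least $\frac1e$ (with the exceptional countable set) into the stated ones.

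\emph{Part (vi).} Here $q>1$, so $p=\frac1q\in(0,1)$ and Theorem P-2(iii) applies. Under this substitution
$$\frac p{1-p}=\frac1{q-1},\qquad \frac1{1-p}=\frac q{q-1},$$
and
$$(1-p)p^{L-1}L=\Big(1-\frac1q\Big)q^{1-L}L=\frac{(q-1)L}{q^L}.$$

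For the scaling regimes, parts (i),(ii) come from Theorem \ref{q>1}(i),(ii), and parts (iv),(v) come from Theorem P-2(i),(ii). The catch is that $q_n=1\mp\frac c{n^\alpha}$ gives
$$p_n=1\pm\frac c{n^\alpha}+O(n^{-2\alpha}),$$
not exactly $1\pm\frac c{n^\alpha}$. So the real work is a robustness check: I will verify that the proofs of Theorem P-2(i),(ii) and Theorem \ref{q>1}(i),(ii) use $q_n$ only through the condition $n^\alpha\log q_n\to\mp c$ (respectively $\pm c$ for $p_n$).

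To do this I rerun the asymptotic analysis of the exact formula \eqref{exactform}. The prefactor $\frac{1-p}{1-p^n}$, the powers $p^{n-M-1}$ and $p^M$, and the Riemann-sum approximation of $\sum_{j=M+1}^n\frac1{1-p^{j-1}}$ all enter only through expressions of the form $p_n^{xn^\alpha}$ and $n^\alpha(1-p_n)$. For these,
$$\log p_n^{xn^\alpha}=xn^\alpha\big(\pm cn^{-\alpha}+O(n^{-2\alpha})\big)=\pm cx+O(n^{-\alpha}),$$
and $n^\alpha(1-p_n)\to\mp c$. Hence all limiting functions, and the maximizing locations $M_n^*/n$, $M_n^*/n^\alpha$, or $(n-M_n^*)/n^\alpha$, are unchanged. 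In particular $\frac1c\log\big(1+\frac{1-e^{-c}}{e-1+e^{-c}}\big)$ and $\frac1c(1-\log(e-1))$ transfer to (i),(ii), and the formulas of Theorem P-2 transfer to (iv),(v). The stated limits as $c\to0,\infty$ are elementary and are inherited verbatim.

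The main obstacle is this uniformity check in regimes (ii) and (v). There the relevant indices are at distance of order $n^\alpha$ from $0$ or from $n$, while quantities like $p_n^n$ degenerate to $0$ or $\infty$. I will need the error terms (tail bounds on the sums, and the comparison of sums with integrals) to be controlled uniformly under an $O(n^{-2\alpha})$ perturbation of $\log p_n$. The point of the computation is that each exponent carries at most a factor $n^\alpha$, so the perturbation contributes only $O(n^{-\alpha})\to0$. The exception is the prefactor $p_n^{\,n}$, but it cancels in ratios of success probabilities and, in its raw form, only affects the normalization.
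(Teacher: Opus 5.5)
Your proposal is correct and is the paper's own argument: Corollary~\ref{secminmax} and Corollary~\ref{LM} reduce everything to the largest-rank Mallows problem with parameter $1/q_n$. Parts (i)--(iii) then follow from Theorem~\ref{q>1} and parts (iv)--(vi) from Theorem P-2, using $1/q_n=1\mp\frac c{n^\alpha}+O(n^{-2\alpha})$. The paper calls this last transfer ``essentially immediate,'' and your robustness check (everything depends only on $n^\alpha\log p_n\to\pm c$, while $p_n^{\,n}$ goes to $0$ or $\infty$ and cancels) simply makes explicit what the paper leaves implicit.
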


We now consider the  $P_n^{\mathcal{L}_\text{inv};\{\theta_j\}_{j=1}^n}$-type Luce distributions with another natural set of weights. In order to describe the setup, we need to  state the following result, which as has already been mentioned,  will  play a crucial     role in the proof of Theorem \ref{lucesec}.
Recall that if $a_1\cdots a_n$ is a sequence of distinct real numbers, then its reduced permutation, denoted
by $\text{Red}(a_1\cdots a_n)$,  is the permutation
$\sigma\in S_n$ formed by replacing the smallest element with $\sigma_1$, the next smallest with $\sigma_2$, etc., and finally replacing the largest with $\sigma_n$.
\begin{thm*} {\bf(CDK)}
Let $\{V_j\}_{j=1}^n$ be independent exponential random variables with $V_j\sim\text{Exp}(\theta_j),\ j\in[n]$.
Then
\begin{equation}\label{Red}
P\left(\text{Red}(V_1V_2\cdots V_n)=\sigma\right)=P_n^{\mathcal{L}_\text{inv};\{\theta_j\}_{j=1}^n}(\sigma),
\end{equation}
where $\text{Red}(V_1V_2\cdots V_n)$ is the reduced permutation in $S_n$.
\end{thm*}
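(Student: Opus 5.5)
The plan is to identify the reduced permutation through its inverse and then use the exponential race. With the convention that $\text{Red}$ replaces the $k$th smallest entry by $k$, the event $\text{Red}(V_1\cdots V_n)=\sigma$ says that $V_j$ has rank $\sigma_j$ for every $j$. Equivalently, $\sigma^{-1}_k$ is the index of the $k$th smallest of the $V$'s, so the event is
$$
\{V_{\sigma^{-1}_1}<V_{\sigma^{-1}_2}<\cdots<V_{\sigma^{-1}_n}\}.
$$
Ties have probability zero, so $\text{Red}$ is almost surely well defined. Comparing with \eqref{Luceinv}, it suffices to show that for every ordering $a_1\cdots a_n$ of $[n]$,
$$
P(V_{a_1}<\cdots<V_{a_n})=\prod_{k=1}^n\frac{\theta_{a_k}}{\sum_{l=k}^n\theta_{a_l}}.
$$
The right-hand side is exactly the probability of drawing balls $a_1,\dots,a_n$ in that order in the weighted sampling without replacement that defines $P_n^{\mathcal{L}_\text{inv}}$.

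I would prove this identity by induction on $n$, using two standard facts about independent exponentials. First, the minimum of $\{V_j\}_{j\in A}$ is $\text{Exp}(\sum_{j\in A}\theta_j)$, and the index achieving it is $k$ with probability $\theta_k/\sum_{j\in A}\theta_j$. This follows from the direct computation
$$
P\Big(V_k<\min_{j\neq k}V_j\Big)=\int_0^\infty\theta_ke^{-\theta_kt}\,e^{-t\sum_{j\ne k}\theta_j}\,dt .
$$
Second, by memorylessness, conditionally on $\{V_{a_1}=t \text{ is the minimum}\}$, the variables $\{V_j-t\}_{j\neq a_1}$ are again independent with $V_j-t\sim\text{Exp}(\theta_j)$.

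To justify the second fact, I would write the joint density restricted to $\{V_j>t \text{ for } j\neq a_1\}$ and observe that it factorizes. This gives
$$
P(V_{a_1}<\cdots<V_{a_n})=\frac{\theta_{a_1}}{\omega_n}\,P(W_{a_2}<\cdots<W_{a_n}),
$$
where the $W_j$ are independent $\text{Exp}(\theta_j)$ for $j\neq a_1$. The induction hypothesis, applied to the $n-1$ weights $\{\theta_j\}_{j\neq a_1}$, then finishes the argument.

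An alternative that avoids conditioning is to integrate directly over $0<t_1<\cdots<t_n$. One performs the innermost integral in $t_n$ first, then $t_{n-1}$, and so on; each step produces the factor $\theta_{a_k}/\sum_{l\ge k}\theta_{a_l}$. The only step needing any care is the memorylessness factorization, and it is routine, so I do not expect a real obstacle.
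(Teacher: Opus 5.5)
The paper does not prove this statement; it only remarks that it is equivalent to Theorem 2.1 of \cite{CDK}. Your argument is correct and gives a self-contained proof. The reduction to $P(V_{a_1}<\cdots<V_{a_n})=\prod_{k}\theta_{a_k}/\sum_{l\ge k}\theta_{a_l}$ is right, and both the memorylessness induction and the iterated integral over $0<t_1<\cdots<t_n$ establish it.

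Your first step also spells out the translation behind the word ``equivalent'' (the paper notes that \cite{CDK} works with $P_n^{\mathcal{L}}$). The order in which the clocks ring, $\sigma^{-1}_1\cdots\sigma^{-1}_n$, is a weighted sample without replacement with law $P_n^{\mathcal{L};\{\theta_j\}_{j=1}^n}$. $\text{Red}(V_1\cdots V_n)$ is its inverse, the rank vector.

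Stating the convention that the $k$th smallest entry becomes $k$ is worthwhile, because the paper's verbal definition of $\text{Red}$ is ambiguous as written. Yours is the convention the paper actually uses: in the proof of Theorem~\ref{lucesec}, $\sigma_j=1$ iff $V_j$ is the overall minimum. Under the other reading, the same computation would give $P_n^{\mathcal{L}}$ instead of $P_n^{\mathcal{L}_{\text{inv}}}$.

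Your route also has a side benefit. The factorization you prove says the value of the minimum is independent of the overshoots $V_j-t$, which are again independent $\text{Exp}(\theta_j)$; this is stronger than Lemma~\ref{3}. It gives Lemma~\ref{4} immediately, and likewise the iterated version $V_{l_k}\,|\,A_k\sim\text{Exp}(\sum_{i=1}^{l_k}\theta_i)$ used in the proof of Proposition~\ref{BrussresultLuce}. The reason is that the additional conditioning events there depend only on the overshoots.
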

The above theorem is equivalent to Theorem 2.1 in \cite{CDK}.
.

Now let $\{V_j\}_{j=1}^n$ be IID standard exponential random variables: $V_j\sim\text{Exp}(1)$.
Denote their order statistics by $V_{(1)}\le V_{(2)}\le\cdots\le V_{(n)}$.
As noted in \cite{CDK}, the following result has an easy proof that can be found for example in \cite{Feller}:

\it\noindent The random variables $V_{(1)}, V_{(2)}-V_{(1)}, V_{(3)}-V_{(2)},\cdots V_{(n)}-V_{(n-1)}$
are independent random variables with distributions\rm\
\begin{equation}\label{orderstat}
V_{(1)}\sim\text{Exp}(n),\ V_{(2)}-V_{(1)}\sim\text{Exp}(n-1),\cdots, V_{(n)}-V_{(n-1)}\sim\text{Exp}(1).
\end{equation}
Consider now the permutation obtained by reducing the gaps between the order statistics:
$\text{Red}\left(V_{(1)}, V_{(2)}-V_{(1)}, V_{(3)}-V_{(2)},\cdots V_{(n)}-V_{(n-1)}\right)$.
From \eqref{Red} and \eqref{orderstat}, it follows that
\begin{equation}\label{gaps}
\begin{aligned}
& \text{Red}\left(V_{(1)}, V_{(2)}-V_{(1)}, V_{(3)}-V_{(2)},\cdots V_{(n)}-V_{(n-1)}\right)\in S_n
\ \text{has the }\\
& P_n^{\mathcal{L}_\text{inv};\{\theta^{(n)}_j\}_{j=1}^n}\ \text{-type Luce distribution where the weights are given by}\
\end{aligned}
\end{equation}
\begin{equation}\label{Sukh}
\theta^{(n)}_j=n+1-j,\ j\in[n].
\end{equation}
In \cite{CDK}, the weights in \eqref{Sukh} are called the \it Sukhatme weights\rm, named after the author who proved \eqref{gaps} \cite{S}.
We will call the weights $\theta_j=j,\ j\in[n]$, the \it reverse Sukhatme weights\rm.  From the above discussion, the reduced permutation
$\text{Red}\left( V_{(n)}-V_{(n-1)}, V_{(n-1)}-V_{(n-2)},\cdots,V_{(2)}-V_{(1)},V_{(1)}\right)$ has the
$P_n^{\mathcal{L}_\text{inv};\{\theta_j\}_{j=1}^n}$ Luce distribution with the reverse Sukhatme weights.

The following theorem analyzes  the secretary problem for the $P_n^{\mathcal{L}_\text{inv};\{\theta^{(n)}_j\}_{j=1}^n}$-type Luce distribution
with the Sukhatme weights  and with the reverse Sukhatme weights in the case that the smallest number is of highest rank.
\begin{theorem}\label{Suksec}
\noindent i. Consider the secretary problem for the
$P_n^{\mathcal{L}_\text{inv};\{\theta^{(n)}_j\}_{j=1}^n}$-type Luce distribution with the Sukhatme weights defined in \eqref{Sukh}
in the case that the smallest number is  of highest rank.
Then the asymptotically optimal strategy is $\mathcal{S}^\downarrow(n,M_n^*)$, where
\begin{equation}\label{Msukh}
M_n^*\sim \left(1-(1-\frac1e)^\frac12\right)n\approx0.2049\thinspace n,
\end{equation}
and the corresponding limiting probability of success is
$$
\lim_{n\to\infty}P_n^{\mathcal{L}_\text{inv};\{\theta^{(n)}_j\}_{j=1}^n}(\mathcal{S}^\downarrow(n,M_n^*))=\frac1e.
$$
\noindent ii. Consider the secretary problem for the
$P_n^{\mathcal{L}_\text{inv};\{\theta_j\}_{j=1}^n}$-type Luce distribution with the reverse Sukhatme weights $\theta_j=j,\ j\in[n]$
in the case that the smallest number is  of highest rank.
Then the asymptotically optimal strategy is $\mathcal{S}^\downarrow(n,M_n^*)$, where
\begin{equation}\label{Msukh}
M_n^*\sim e^{-\frac12}\thinspace n\approx0.6065\thinspace n,
\end{equation}
and the corresponding limiting probability of success is
$$
\lim_{n\to\infty}P_n^{\mathcal{L}_\text{inv};\{\theta_j\}_{j=1}^n}(\mathcal{S}^\downarrow(n,M_n^*))=\frac1e.
$$
\end{theorem}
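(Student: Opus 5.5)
The plan is to specialize the exact formula \eqref{lucesecmin} of Theorem \ref{lucesec} to the two weight families. In each case I will write the success probability as $f_n(M/n)$, show that $f_n\to f$ uniformly (suitably away from $0$) for an explicit limit $f$, and then maximize $f$. In both cases $f$ will turn out to be $-y\log y$ in a reparametrized variable $y=g(x)$, where $x=M/n$ and $g$ is increasing from $0$ to $1$ on $[0,1]$. Since $\max_{y\in[0,1]}(-y\log y)=\frac1e$, attained only at $y=\frac1e$, this gives limiting success probability $\frac1e$ and $M_n^*\sim g^{-1}(\frac1e)\,n$.

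\emph{Sukhatme weights} $\theta^{(n)}_j=n+1-j$. Here $\sum_{i=1}^M\theta^{(n)}_i=M(n+1)-\frac{M(M+1)}2$ and $\omega_n=\frac{n(n+1)}2$, so the prefactor equals $2x-x^2+O(\frac1n)$ uniformly in $x\in[0,1]$. The sum $\sum_{j=M+1}^n\frac{n+1-j}{(j-1)(n+1)-\frac{(j-1)j}2}$ is a Riemann sum for
\[
\int_x^1\frac{1-t}{t-\frac{t^2}2}\,dt=-\log(2x-x^2),
\]
because $1-t$ is the derivative of $t-\frac{t^2}2$. Hence $f(x)=-y\log y$ with $y=1-(1-x)^2$. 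Setting $y=\frac1e$ gives $x=1-(1-\frac1e)^{1/2}$, which is \eqref{Msukh}.

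\emph{Reverse Sukhatme weights} $\theta_j=j$. Now the prefactor is $\frac{M(M+1)}{n(n+1)}=x^2+O(\frac1n)$. The sum $\sum_{j=M+1}^n\frac{2j}{(j-1)j}=2\sum_{j=M+1}^n\frac1{j-1}$ equals $-2\log x+o(1)$. So $f(x)=-x^2\log x^2=-y\log y$ with $y=x^2$, and the maximizer is $x=e^{-1/2}$.

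The main obstacle is making the Riemann-sum approximation uniform enough to justify interchanging $\lim_n$ with $\max_M$. For $x\in[\delta,1]$ the summands are $\frac1n$ times a function that is bounded and uniformly continuous there, so $f_n\to f$ uniformly on $[\delta,1]$. For small $M$ I will use a crude bound instead. In both cases $\sum_{i\le j-1}\theta_i\ge c\,(j-1)\,\theta_{j}\cdot$(bounded factor), so the sum is at most $C\log\frac{n}{M}+C$. In the Sukhatme case the prefactor is at most $C\frac Mn$, and in the reverse case at most $C(\frac Mn)^2$. Therefore $\sup_{1\le M\le\delta n}f_n(M/n)\le C\delta(\log\frac1\delta+1)$, which is small for small $\delta$. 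The case $M=0$ gives $\theta_1/\omega_n\to0$. Combining these estimates, $\max_M f_n(M/n)\to\max f=\frac1e$. The same estimates show that any $M_n$ with $M_n/n\not\to g^{-1}(\frac1e)$ along a subsequence has strictly smaller limiting success probability, since the maximizer of $f$ is unique. This yields both the asymptotic form of $M_n^*$ and the limit $\frac1e$.
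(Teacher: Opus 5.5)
Your proposal is correct and is essentially the paper's proof: you specialize \eqref{lucesecmin} to $\theta_j=n+1-j$ and $\theta_j=j$, pass to the Riemann-sum limits $-(2b-b^2)\log(2b-b^2)$ and $-2b^2\log b$ along $M_n\sim bn$, and maximize $-y\log y$ at $y=\frac1e$ (the paper's display \eqref{finalformb} has a typo, $2b-2b^2$ for $2b-b^2$, but its maximization uses the correct form, which matches yours). Your uniformity argument, namely uniform convergence on $[\delta,1]$ together with the bound $C\delta(\log\frac1\delta+1)$ for $M\le\delta n$, goes beyond the paper: it justifies exchanging $\lim_n$ with $\max_M$ and rules out $M_n=o(n)$ or a non-convergent $M_n/n$, whereas the paper only treats $M_n\sim bn$ with $b\in(0,1)$ and leaves this step implicit.
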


In section \ref{prelim} we prove some  facts about sets of independent exponential random variables that  will be crucial  for the proofs of Theorem \ref{lucesec} and Proposition \ref{BrussresultLuce}.
We prove Theorem \ref{lucesec} in section \ref{pflucesec}, we prove Propositions \ref{BrussresultLuce} and \ref{BrussresultMallows} in section \ref{pfBrussresults}, we prove
Theorem \ref{q>1} in section \ref{pfq>1} and Theorem \ref{Suksec} in section \ref{pfSuksec}.

\section{Preliminary results concerning independent exponential random variables}\label{prelim}
In this, section the random variables  $\{V_i\}_{i=1}^\infty$ are independent with $V_i\sim\text{Exp}(\theta_i)$.
\begin{lemma}\label{1}
\begin{equation*}
P(V_1<V_2)=\frac{\theta_1}{\theta_1+\theta_2}.
\end{equation*}
\end{lemma}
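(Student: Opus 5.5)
The plan is to compute the probability directly from the joint density and then integrate, conditioning on one of the variables. Since $V_1$ and $V_2$ are independent with densities $\theta_1e^{-\theta_1x}$ and $\theta_2e^{-\theta_2y}$ on $(0,\infty)$, the event $\{V_1<V_2\}$ can be handled by first fixing the value of $V_1$.

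Conditioning on $V_1=x$ and using independence, we have $P(V_2>x)=e^{-\theta_2x}$. Hence
\begin{equation*}
P(V_1<V_2)=\int_0^\infty\theta_1e^{-\theta_1x}\,e^{-\theta_2x}\,dx=\frac{\theta_1}{\theta_1+\theta_2}.
\end{equation*}
The only ingredients are the exponential tail formula $P(V_2>x)=e^{-\theta_2x}$ and the elementary integral $\int_0^\infty e^{-(\theta_1+\theta_2)x}\,dx=(\theta_1+\theta_2)^{-1}$.

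Ties need not be considered: $P(V_1=V_2)=0$ because the joint law is absolutely continuous. There is no real obstacle here, since the statement is a routine calculus computation.

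It is also worth keeping the following by-product of the same computation, which will be needed in what follows. Multiplying the two tail functions gives
\begin{equation*}
P(\min(V_1,V_2)>x)=e^{-(\theta_1+\theta_2)x},
\end{equation*}
so $\min(V_1,V_2)\sim\text{Exp}(\theta_1+\theta_2)$. By induction this extends to finitely many independent exponentials.
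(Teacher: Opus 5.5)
Your proof is correct and is essentially the paper's: the paper writes $P(V_1<V_2)$ as the iterated integral $\int_0^\infty \theta_1e^{-\theta_1x}\int_x^\infty\theta_2e^{-\theta_2y}\,dy\,dx$, whose inner integral is exactly the tail $e^{-\theta_2x}$ you use. Your by-product about $\min(V_1,V_2)$ is the paper's Lemma 2, which it also proves by multiplying the tails.
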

\begin{proof}
$$
P(V_1<V_2)=\int_0^\infty dx\thinspace\theta_1e^{-\theta_1x}\int_{x_1}^\infty dy\thinspace\theta_2e^{-\theta_2y}=\frac{\theta_1}{\theta_1+\theta_2}.
$$
\end{proof}
\begin{lemma}\label{2}
\begin{equation*}\label{minexpdist}
\min(V_1,V_2,\cdots, V_r)\sim\text{\rm Exp}(\sum_{i=1}^r\theta_i),\ r\in\mathbb{N}.
\end{equation*}
\begin{proof}
$$
P(\min(V_1,V_2,\cdots, V_r)\ge x)=\prod_{i=1}^rP(V_i\ge x)=\prod_{i=1}^r e^{-\theta_ix}=e^{-(\sum_{i=1}^r\theta_i)x}.
$$
\end{proof}
\end{lemma}
\begin{lemma}\label{3}
\begin{equation*}\label{minexpconddist}
V_1|\{V_1=\min(V_1,\cdots, V_r)\}\sim\text{\rm Exp}(\sum_{i=1}^r\theta_i), \ r\in\mathbb{N}.
\end{equation*}
\end{lemma}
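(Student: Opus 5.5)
The plan is to compute the conditional survival function directly and recognize it as that of an $\text{Exp}(\sum_{i=1}^r\theta_i)$ random variable. Write $W=\min(V_2,\cdots,V_r)$. By Lemma \ref{2}, $W\sim\text{Exp}(\omega)$ with $\omega=\sum_{i=2}^r\theta_i$, and $W$ is independent of $V_1$. The event $\{V_1=\min(V_1,\cdots,V_r)\}$ coincides, up to a null set (ties have probability zero for continuous independent variables), with $\{V_1<W\}$. By Lemma \ref{1} applied to the pair $(V_1,W)$, this event has probability
\[
P(V_1<W)=\frac{\theta_1}{\sum_{i=1}^r\theta_i}.
\]

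Next, for $x\ge0$, I compute the joint probability by conditioning on $V_1$ and using independence:
\[
P(V_1>x,\ V_1<W)=\int_x^\infty \theta_1e^{-\theta_1 y}e^{-\omega y}\,dy=\frac{\theta_1}{\sum_{i=1}^r\theta_i}\,e^{-(\sum_{i=1}^r\theta_i)x}.
\]
Dividing by $P(V_1<W)$ gives
\[
P\bigl(V_1>x\,\big|\,V_1=\min(V_1,\cdots,V_r)\bigr)=e^{-(\sum_{i=1}^r\theta_i)x},
\]
which is the claimed conditional law.

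The case $r=1$ is trivial, since the conditioning event has probability one. Nothing here is a real obstacle. The only point needing care is the identification of the conditioning event with $\{V_1<W\}$ modulo null sets, so that the conditional probability is the elementary ratio. A more careful way to present the same computation is as a joint integral over $V_1,\dots,V_r$ restricted to $\{V_1<V_j \text{ for all } j\ge 2\}$.
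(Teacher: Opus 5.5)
Your proof is correct and is essentially the paper's argument. The paper also writes the conditional survival function as the ratio $P(V_1\ge x,\,V_1<\min(V_2,\dots,V_r))/P(V_1<\min(V_2,\dots,V_r))$, takes the denominator $\theta_1/\sum_{i=1}^r\theta_i$ from Lemmas~\ref{1} and~\ref{2}, and computes the numerator by integrating against the $\text{Exp}(\sum_{i=2}^r\theta_i)$ law of the minimum of the others.
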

\begin{proof}
\begin{equation}\label{condform}
P(V_1\ge x|V_1=\min(V_1,\cdots, V_r))=\frac{P(V_1\ge x,V_1<\min(V_2,\cdots, V_r))}{P(V_1<\min(V_2,\cdots, V_r))}.
\end{equation}
From Lemmas \ref{1} and \ref{2},
\begin{equation}\label{denom}
P(V_1<\min(V_2,\cdots, V_r))=\frac{\theta_1}{\sum_{i=1}^r\theta_i}.
\end{equation}
From Lemma \ref{2},
\begin{equation}\label{numer}
\begin{aligned}
&P(V_1\ge x,V_1<\min(V_2,\cdots, V_r))=\int_x^\infty dx\thinspace\theta_1e^{-\theta_1x}\int_x^\infty dy\thinspace (\sum_{i=2}^r\theta_i)e^{-(\sum_{i=2}^r\theta_i)y}=\\
&\frac{\theta_1}{\sum_{i=1}^r\theta_i}e^{-(\sum_{i=1}^r\theta_i)x}.
\end{aligned}
\end{equation}
The lemma follows from \eqref{condform}-\eqref{numer}.
\end{proof}
\begin{lemma}\label{4}
\begin{equation}
V_1|\{V_1=\min(V_1,\cdots, V_r), V_2=\min(V_2,\cdots, V_s)\}  \sim\text{\rm Exp}(\sum_{i=1}^r\theta_i),\ 3\le s\le r.
\end{equation}
\end{lemma}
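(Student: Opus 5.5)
The plan is to mirror the proof of Lemma \ref{3}: write the conditional survival function as a ratio and compute numerator and denominator directly. The denominator is $P(A)$ with $A=\{V_1=\min(V_1,\dots,V_r),\,V_2=\min(V_2,\dots,V_s)\}$; the numerator is $P(V_1\ge x, A)$. The key observation is that, since $3\le s\le r$, the event $A$ says that $V_1$ is smaller than everything in $\{V_2,\dots,V_r\}$ and that $V_2$ is smaller than everything in $\{V_3,\dots,V_s\}$. Both $V_1$ and $V_2$ lie in the first index block, so these are nested conditions.

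First condition on $V_1=y$. Given $V_1=y$, the event $A$ becomes
\[
\{V_i>y \text{ for } 2\le i\le r\}\cap\{V_2<\min(V_3,\dots,V_s)\}.
\]
By memorylessness, conditioned on all $V_i>y$ for $2\le i\le r$, the overshoots $V_i-y$ are again independent with $V_i-y\sim\text{Exp}(\theta_i)$. Hence the conditional probability that $V_2<\min(V_3,\dots,V_s)$ equals $\frac{\theta_2}{\sum_{i=2}^s\theta_i}$ by Lemmas \ref{1} and \ref{2}, and this does not depend on $y$. Therefore
\[
P(V_1\ge x, A)=\int_x^\infty \theta_1e^{-\theta_1 y}\,e^{-(\sum_{i=2}^r\theta_i)y}\,dy\cdot\frac{\theta_2}{\sum_{i=2}^s\theta_i}
=\frac{\theta_1}{\sum_{i=1}^r\theta_i}e^{-(\sum_{i=1}^r\theta_i)x}\,\frac{\theta_2}{\sum_{i=2}^s\theta_i}.
\]

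Setting $x=0$ gives $P(A)$. Dividing, the factor $\frac{\theta_1}{\sum_{i=1}^r\theta_i}\frac{\theta_2}{\sum_{i=2}^s\theta_i}$ cancels and leaves $P(V_1\ge x\mid A)=e^{-(\sum_{i=1}^r\theta_i)x}$, which is the claim.

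The only step needing care is the memorylessness claim: that the event $\{V_2<\min(V_3,\dots,V_s)\}$ is conditionally independent of $V_1$ given $V_1<\min(V_2,\dots,V_r)$. I would justify it explicitly rather than by appeal to intuition. Write the joint density integral over the region $y<v_2<\min(v_3,\dots,v_s)$, $v_i>y$, substitute $v_i=y+u_i$, and observe that the integrand factors as $e^{-(\sum_{i=2}^r\theta_i)y}$ times a $y$-free integral. This substitution is the main point. The rest is the same elementary computation as in \eqref{numer}.
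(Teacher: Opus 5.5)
Your proof is correct, and it takes a genuinely different route from the paper.

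The paper first uses Lemma \ref{2} to reduce to four independent exponentials $V_1$, $V_2$, $Y=\min(V_3,\dots,V_s)$, $Z=\min(V_{s+1},\dots,V_r)$. It rewrites the conditioning event as $\{V_1<V_2,\,V_1<Z,\,V_2<Y\}$ and evaluates numerator and denominator by an explicit iterated integral, with $v_2$ outermost and $v_1$ ranging over $(x,v_2)$. The case $s=r$ is handled separately. The product form $\frac{\theta_1\theta_2}{(\sum_{i=2}^s\theta_i)(\sum_{i=1}^r\theta_i)}e^{-(\sum_{i=1}^r\theta_i)x}$ appears only after a partial-fraction cancellation.

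You instead condition on the overall minimum $V_1=y$ and use the shift $v_i=y+u_i$ to pull out $e^{-(\sum_{i=2}^r\theta_i)y}$. This makes the $y$-dependence visibly the same as in Lemma \ref{3}, and no case split is needed.

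Your route gives more than brevity:
\begin{itemize}
\item At $x=0$ it recovers \eqref{uselatertoo}, which is needed again in the proof of Theorem \ref{lucesec}. It appears as the product $P(V_1=\min_{i\le r}V_i)\,P(V_2=\min_{2\le i\le s}V_i)$, which exhibits the independence of the two events.
\item The same shift argument works verbatim when the conditioning includes any further event determined by the relative order of $V_2,\dots,V_r$. In particular it gives $V_{l_k}\mid A_k\sim\text{Exp}(\sum_{i=1}^{l_k}\theta_i)$ directly, and by induction \eqref{jointprob} in the proof of Proposition \ref{BrussresultLuce}.
\item By contrast, the paper appeals there to an ``obvious'' extension of its own computation, which it admits becomes increasingly tedious.
\end{itemize}

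The paper's calculation is self-contained and purely mechanical, but it does not scale well beyond two nested minima.
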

\begin{proof}
If $s<r$, we have
\begin{equation}\label{bigcondexp}
\begin{aligned}
&P(V_1\ge x|V_1=\min(V_1,\cdots, V_r), V_2=\min(V_2,\cdots, V_s) )=\\
&\frac{P(V_1\ge x,V_1<V_2, V_1<\min(V_{s+1},\cdots, V_r), V_2<\min(V_3,\cdots ,V_s))}{P(V_1<V_2, V_1<\min(V_{s+1},\cdots, V_r), V_2<\min(V_3,\cdots, V_s))}.
\end{aligned}
\end{equation}
If $s=r$, then the above equation holds but with the term $V_1<\min(V_{s+1},\cdots, V_r)$ deleted from the numerator and denominator.
We will assume from now on the $s<r$; the case $s=r$ follows similarly.
Let $Y=\min(V_3,\cdots V_s)$ and $Z=\min(V_{s+1},\cdots, V_r)$.
Then $V_1,V_2,Y,Z$ are independent and by Lemma \ref{2}, $Y\sim\text{Exp}(\sum_{i=3}^s\theta_i)$ and $Z\sim\text{Exp}(\sum_{i=s+1}^r\theta_i)$.
For notational convenience, denote the probability densities functions of these four random variables by $f_{V_1},f_{V_2},f_Y,f_Z$.

Then the denominator on the right hand side of \eqref{bigcondexp} is given by
\begin{equation}\label{keycalc}
P(V_1<V_2, V_1<\min(V_{s+1},\cdots, V_r), V_2<\min(V_3,\cdots V_s))=P(V_1<V_2,V_1<Z, V_2<Y),
\end{equation}
and
\begin{equation}\label{multiint}
\begin{aligned}
&P(V_1<V_2,V_1<Z, V_2<Y)=\\
&\int_0^\infty dv_2\int_{v_2}^\infty dy\left[\int_0^{v_2}dv_1\left(\int_{v_1}^\infty dz f_Z(z)\right)f_{V_1}(v_1)\right]f_Y(y)f_{V_2}(v_2).
\end{aligned}
\end{equation}
We have
$$
\int_{v_1}^\infty dz f_Z(z)=\exp(-(\sum_{i=s+1}^r\theta_i)v_1),
$$
and then
\begin{equation}\label{stage2}
\begin{aligned}
&\int_0^{v_2}dv_1\left(\int_{v_1}^\infty dz f_Z(z)\right)f_{V_1}(v_1)=\int_0^{v_2}\exp(-(\sum_{i=s+1}^r\theta_i)v_1)\thinspace \theta_1\exp(-\theta_1v_1)dv_1=\\
&\frac{\theta_1}{\theta_1+\sum_{i=s+1}^r\theta_i}\left(1-\exp\big(-(\theta_1+\sum_{i=s+1}^r\theta_i)v_2\big)\right).
\end{aligned}
\end{equation}
Therefore
\begin{equation}\label{stage3}
\begin{aligned}
&\int_{v_2}^\infty dy\left[\int_0^{v_2}dv_1\left(\int_{v_1}^\infty dz f_Z(z)\right)f_{V_1}(v_1)\right]f_Y(y)=\\
&\frac{\theta_1}{\theta_1+\sum_{i=s+1}^r\theta_i}\left(1-\exp(-(\theta_1+\sum_{i=s+1}^r\theta_i)v_2)\right)\int_{v_2}^\infty f_Y(y)dy=\\
&\frac{\theta_1}{\theta_1+\sum_{i=s+1}^r\theta_i}\left(1-\exp(-(\theta_1+\sum_{i=s+1}^r\theta_i)v_2)\right)\exp(-(\sum_{i=3}^s\theta_i)v_2),
\end{aligned}
\end{equation}
and
\begin{equation}\label{stage4}
\begin{aligned}
&\int_0^\infty dv_2\int_{v_2}^\infty dy\left[\int_0^{v_2}dv_1\left(\int_{v_1}^\infty dz f_Z(z)\right)f_{V_1}(v_1)\right]f_Y(y)f_{V_2}(v_2)=\\
&\frac{\theta_1\theta_2}{\theta_1+\sum_{i=s+1}^r\theta_i}\int_0^\infty\left(1-\exp(-(\theta_1+\sum_{i=s+1}^r\theta_i)v_2)\right)\exp\big(-(\sum_{i=3}^s\theta_i)v_2\big)\theta_2\exp(-\theta_2v_2)dv_2=\\
&\frac{\theta_1\theta_2}{\theta_1+\sum_{i=s+1}^r\theta_i}\left(\frac1{\sum_{i=2}^s\theta_i}-\frac1{\sum_{i=1}^r\theta_i}\right)=
\frac{\theta_1\theta_2}{(\sum_{i=2}^s\theta_i)(\sum_{i=1}^r\theta_i)}.
\end{aligned}
\end{equation}
From \eqref{multiint} and \eqref{stage4}, it follows that
\begin{equation}\label{uselatertoo}
P(V_1<V_2,V_1<Z, V_2<Y)=
\frac{\theta_1\theta_2}{(\sum_{i=2}^s\theta_i)(\sum_{i=1}^r\theta_i)}.
\end{equation}
We have displayed this formula because it will be  used in the proof of Theorem \ref{lucesec}.
By \eqref{keycalc}, the right hand side of \eqref{uselatertoo} is the denominator on the right hand side of \eqref{bigcondexp}.

The numerator on the right hand side of \eqref{bigcondexp} is calculated similarly.
We have
\begin{equation}\label{multiinttake2}
\begin{aligned}
&P(V_1\ge x,V_1<V_2, V_1<\min(V_{s+1},\cdots, V_r), V_2<\min(V_3,\cdots V_s))=\\
&P(V_1\ge x,V_1<V_2,V_1<Z, V_2<Y)=\\
&\int_x^\infty dv_2\int_{v_2}^\infty dy\left[\int_x^{v_2}dv_1\left(\int_{v_1}^\infty dz f_Z(z)\right)f_{V_1}(v_1)\right]f_Y(y)f_{V_2}(v_2).
\end{aligned}
\end{equation}
Instead of \eqref{stage2}, we have
\begin{equation}\label{stage2take2}
\begin{aligned}
&\int_x^{v_2}dv_1\left(\int_{v_1}^\infty dz f_Z(z)\right)f_{V_1}(v_1)=\int_x^{v_2}\exp(-(\sum_{i=s+1}^r\theta_i)v_1)\thinspace \theta_1\exp(-\theta_1v_1)dv_1=\\
&\frac{\theta_1}{\theta_1+\sum_{i=s+1}^r\theta_i}\left(\exp\big(-(\theta_1+\sum_{i=s+1}^r\theta_i)x\big)-\exp\big(-(\theta_1+\sum_{i=s+1}^r\theta_i)v_2\big)\right).
\end{aligned}
\end{equation}
And instead of \eqref{stage3} we have
\begin{equation}\label{stage3take2}
\begin{aligned}
&\int_{v_2}^\infty dy\left[\int_x^{v_2}dv_1\left(\int_{v_1}^\infty dz f_Z(z)\right)f_{V_1}(v_1)\right]f_Y(y)=\\
&\frac{\theta_1}{\theta_1+\sum_{i=s+1}^r\theta_i}\left(\exp(-(\theta_1+\sum_{i=s+1}^r\theta_i)x)-\exp(-(\theta_1+\sum_{i=s+1}^r\theta_i)v_2)\right)\exp(-(\sum_{i=3}^s\theta_i)v_2).
\end{aligned}
\end{equation}
And then instead of \eqref{stage4}, we have
\begin{equation}\label{stage4take2}
\begin{aligned}
&\int_x^\infty dv_2\int_{v_2}^\infty dy\left[\int_x^{v_2}dv_1\left(\int_{v_1}^\infty dz f_Z(z)\right)f_{V_1}(v_1)\right]f_Y(y)f_{V_2}(v_2)=\frac{\theta_1\theta_2}{\theta_1+\sum_{i=s+1}^r\theta_i}\times\\
&\int_x^\infty\left(\exp(-(\theta_1+\sum_{i=s+1}^r\theta_i)x)-\exp(-(\theta_1+\sum_{i=s+1}^r\theta_i)v_2)\right)\exp\big(-(\sum_{i=3}^s\theta_i)v_2\big)\theta_2\exp(-\theta_2)v_2)dv_2=\\
&\frac{\theta_1\theta_2}{\theta_1+\sum_{i=s+1}^r\theta_i}\left(\frac1{\sum_{i=2}^s\theta_i}-\frac1{\sum_{i=1}^r\theta_i}\right)\exp\big(-(\sum_{i=1}^r\theta_i)x\big)=\\
&\frac{\theta_1\theta_2}{(\sum_{i=2}^s\theta_i)(\sum_{i=1}^r\theta_i)}\exp\big(-(\sum_{i=1}^r\theta_i)x\big).
\end{aligned}
\end{equation}
The lemma now follows from \eqref{bigcondexp}, \eqref{keycalc}, \eqref{uselatertoo}   \eqref{multiinttake2}  and \eqref{stage4take2}.

\end{proof}
\section{Proof of Theorem \ref{lucesec}}\label{pflucesec}
Let  the smallest number 1 be of highest rank.
The event $\mathcal{S}^\downarrow(n,M))\subset S_n$ that the highest ranked item 1 is obtained via the
 strategy of
rejecting the first $M$ items and then  selecting the first
later-arriving item whose rank is higher than that of any of the first $M$ items (if such an item exists)
is given by
\begin{equation}\label{union}
\begin{aligned}
\mathcal{S}^\downarrow(n,M)=\begin{cases}\cup_{j=M+1}^n\left\{\sigma\in S_n:\sigma_j=1, \min(\sigma_1,\cdots\sigma_{j-1})=\min(\sigma_1\cdots,\sigma_M)\right\},\\
\text{if}\ M\in\{1,\cdots, n-1\};\\
 \{\sigma\in S_n:\sigma_1=1\},\ \text{if}\ M=0.\end{cases}
\end{aligned}
\end{equation}
From Remark 2 after Corollary \ref{LM}, it follows that  $P_n^{\mathcal{L}_\text{inv};\{\theta_j\}_{j=1}^n}(\sigma_1=1)=\frac{\theta_1}{\sum_{i=1}^n\theta_i}$.
(Remark 2 considers the case $\theta_i=q^i$, but the argument there works for any choice of $\theta_i$.)
This gives \eqref{lucesecmin} in the case $M=0$.

From now on, assume that $M\in\{1,\cdots, n-1\}$. From \eqref{union},
\begin{equation}\label{intermsofsigma}
P_n^{\mathcal{L}_\text{inv};\{\theta_j\}_{j=1}^n}(\mathcal{S}^\downarrow(n,M))=\sum_{j=M+1}^n
P_n^{\mathcal{L}_\text{inv};\{\theta_j\}_{j=1}^n}(\sigma_j=1, \min(\sigma_1,\cdots\sigma_{j-1})=\min(\sigma_1\cdots,\sigma_M)).
\end{equation}
 Let $\{V_j\}_{j=1}^n$ be independent exponential random variables with $V_j\sim\text{Exp}(\theta_j),\ j\in[n]$.
Then it follows from \eqref{Red} in Theorem CDK that
\begin{equation}\label{mins}
\begin{aligned}
&P_n^{\mathcal{L}_\text{inv};\{\theta_j\}_{j=1}^n}(\sigma_j=1, \min(\sigma_1,\cdots\sigma_{j-1})=\min(\sigma_1\cdots,\sigma_M))=\\
&\begin{cases}P(V_j=\min_{i\in[n]}V_i,\min_{i\in[j-1]}V_i=\min_{i\in[M]}V_i),\ j\in\{M+2,\cdots n\};\\P(V_{M+1}=\min_{i\in[n]}V_i),\ j=M+1.\end{cases}
\end{aligned}
\end{equation}
We now calculate the probability on the right hand side of \eqref{mins}. We will assume that  $j\in\{M+2,\cdots n\}$; the case $j=M+1$ follows similarly.
Let
$$
X=\min(V_1,\cdots, V_M);\ \ Y=\min(\sigma_{M+1},\cdots\sigma_{j-1});\ \ Z=\min(V_{j+1},\cdots, V_n);\ \  W=V_j.
$$
Note that $X,Y,Z$ and $W$ are independent. Also, by Lemma \ref{2},
$$
X\sim\text{Exp}(\sum_{i=1}^M\theta_i),\ \ Y\sim\text{Exp}(\sum_{i=M+1}^{j-1}\theta_i),\ \ Z\sim\text{Exp}(\sum_{i=j+1}^n\theta_i).
$$
Then
\begin{equation}\label{XYZW}
\begin{aligned}
&P(V_j=\min_{i\in[n]}V_i,\min_{i\in[j-1]}V_i=\min_{i\in[M]}V_i)=P(Y>X, W<\min(X,Y,Z))=\\
&P(W<X,W<Z,X<Y).
\end{aligned}
\end{equation}
The above probability was calculated in
\eqref{uselatertoo}.
In that formula, $V_1$ plays the role of $W$ here and $V_2$ plays the role of $X$ here.
In \eqref{uselatertoo}  $V_i\sim\text{Exp}(\theta_i),\ i=1,2,$ whereas here $W\sim\text{Exp}(\theta_j)$ and $X\sim\text{Exp}(\sum_{i=1}^M\theta_i)$. Also,
in \eqref{uselatertoo} $Y$ is distributed exponentially with parameter $\sum_{i=3}^s\theta_i$ whereas here $Y$ is  distributed exponentially with parameter $\sum_{i=M+1}^{j-1}\theta_i$,
and in \eqref{uselatertoo} $Z$ is distributed exponentially with parameter $\sum_{i=s+1}^r\theta_i$ whereas here $Z$ is distributed exponentially with parameter $\sum_{i=j+1}^n\theta_i$.
Therefore, making the necessary changes, we obtain from \eqref{uselatertoo} that
\begin{equation}\label{laternow}
P(W<X,W<Z,X<Y)=
\frac{\theta_j\sum_{i=1}^M\theta_i}{(\sum_{i=1}^{j-1}\theta_i)(\sum_{i=1}^n\theta_i)}.
\end{equation}
Now \eqref{lucesecmin} when $M\in\{1,\cdots, n-1\}$ follows from \eqref{intermsofsigma}-\eqref{laternow}.
\hfill $\square$

\section{Proofs of Propositions \ref{BrussresultLuce} and \ref{BrussresultMallows}}\label{pfBrussresults}
\noindent \it Proof of Proposition \ref{BrussresultLuce}.\rm\
By the paragraph before the statement of the proposition, it suffices to show that
the random variables $\{U_j\}_{j=1}^n$ are independent, where
 $U_j=1_{\{\sigma_j=\min(\sigma_1\cdots \sigma_j)\}}$.
Since $U_1=1$ deterministically, it suffices to show that
\begin{equation}\label{fullindep}
\begin{aligned}
&P_n^{\mathcal{L}_\text{inv};\{\theta_j\}_{j=1}^n}(U_{l_1}=U_{l_2}=\cdots=U_{l_r}=1)=\prod_{k=1}^rP_n^{\mathcal{L}_\text{inv};\{\theta_j\}_{j=1}^n}(U_{l_k}=1),\\
&2\le l_1<\cdots<l_r\le n.
\end{aligned}
\end{equation}

 Let $\{V_j\}_{j=1}^n$ be independent exponential random variables with $V_j\sim\text{Exp}(\theta_j),\ j\in[n]$.
By \eqref{Red} and Lemmas \ref{1} and  \ref{2},
\begin{equation}\label{Ukalone}
P_n^{\mathcal{L}_\text{inv};\{\theta_j\}_{j=1}^n}(U_{l_k}=1)=P(V_{l_k}=\min_{i\in[l_k]}V_i)=P(V_{l_k}<\min_{i\in[l_k-1]}V_i)=\frac{\theta_{l_k}}{\sum_{i=1}^{l_k}\theta_i}.
\end{equation}
By \eqref{Red},
\begin{equation}\label{Uktogether}
\begin{aligned}
&P_n^{\mathcal{L}_\text{inv};\{\theta_j\}_{j=1}^n}(U_{l_1}=U_{l_2}=\cdots=U_{l_r}=1)=\\
&P(V_{l_1}=\min_{1\le i\le l_1}V_i,V_{l_2}=\min_{1\le i\le l_2}V_i,\cdots, V_{l_r}=\min_{1\le i\le l_r}V_i).
\end{aligned}
\end{equation}

Let
$$
\begin{aligned}
&Y_1=\min(V_1,\cdots, V_{l_1-1}), \ Y_2=\min(V_{l_1+1},\cdots, V_{l_2-1}),\cdots,\\
& Y_r=\min(V_{l_{r-1}+1},\cdots, V_{l_r-1}).
\end{aligned}
$$
We will assume that $l_k-l_{k-1}\ge2$, for $k=2,\cdots, r$. Otherwise not all of the above random variables are well defined.
When this condition is not met, the proof proceeds similarly, removing the random variables that aren't defined.
Note that the random variables $\{Y_i\}_{i=1}^r$ and $\{V_{l_i}\}_{i=1}^r$
are all independent, and by Lemma \ref{2},
$$
Y_k\sim\text{Exp}(\sum_{i=l_{k-1}+1}^{l_k-1}\theta_i),\ k=1,\cdots,r,
$$
where we define $l_0=0$.
We can rewrite
\eqref{Uktogether} as
\begin{equation}\label{Uktogetheragain}
\begin{aligned}
&P_n^{\mathcal{L}_\text{inv};\{\theta_j\}_{j=1}^n}(U_{l_1}=U_{l_2}=\cdots=U_{l_r}=1)=\\
&P(V_{l_1}<Y_1, V_{l_2}<V_{l_1}, V_{l_2}<Y_2,\cdots, V_{l_{r-1}}<V_{l_{r-2}}, V_{l_{r-1}}<Y_{r-1},V_{l_r}<V_{l_{r-1}}, V_{l_r}<Y_r).
\end{aligned}
\end{equation}

%&P(V_{l_1}<Y_1, V_{l_2}<V_{l_1}, V_{l_2}<Y_2,\cdots, V_{l_{r-1}}<V_{l_{r-2}}, V_{l_{r-1}}<Y_{r-1})\times\\
%&P(V_{l_r}<V_{l_{r-1}}, V_{l_r}<Y_r|V_{l_1}<Y_1, V_{l_2}<V_{l_1}, V_{l_2}<Y_2,\cdots, V_{l_{r-1}}<V_{l_{r-2}}, V_{l_{r-1}}<Y_{r-1}).

 Let
 $$
 A_k=\{V_{l_1}<Y_1, V_{l_2}<V_{l_1}, V_{l_2}<Y_2,\cdots, V_{l_k}<V_{l_{k-1}}, V_{l_k}<Y_k\},\ k=1\cdots,r.
 $$
 Note that $A_k\subset\{V_{l_k}=\min(V_1,\cdots, V_{l_k})\}$.
We rewrite  the right hand side  of \eqref{Uktogetheragain} as
\begin{equation}\label{condwithA}
\begin{aligned}
&P(V_{l_1}<Y_1,\cdots, V_{l_{r-1}}<V_{l_{r-2}}, V_{l_{r-1}}<Y_{r-1},V_{l_r}<V_{l_{r-1}}, V_{l_r}<Y_r)=\\
&P(A_r)=P(A_{r-1})P(V_{l_r}<V_{l_{r-1}}, V_{l_r}<Y_r|A_{r-1}).
\end{aligned}
\end{equation}
We have $A_1=\{V_{l_1}=\min(V_1,\cdots, V_{l_1})\}$. Thus, by Lemma \ref{3}, the conditional distribution of $V_{l_1}$ given $A_1$ is
$V_{l_1}|A_1\sim \text{Exp}(\sum_{i=1}^{l_1}\theta_i)$.
We have $A_2=\{V_{l_2}=\min(V_1,\cdots, V_{l_2}), V_{l_1}=\min(V_1,\cdots, V_{l_1})\}$.
Thus, by Lemma \ref{4}, the conditional distribution of $V_{l_2}$ given $A_2$ is
$V_{l_2}|A_2\sim \text{Exp}(\sum_{i=1}^{l_2}\theta_i)$.
Lemma \ref{4} can be extended in the obvious way (with a more and more tedious proof) to give
$V_{l_k}|A_k\sim\text{Exp}(\sum_{i=1}^{l_k}\theta_i)$, for $k=1,\cdots, r$.
Consequently, the second probability on the right hand side of \eqref{condwithA} satisfies
\begin{equation}\label{simplecond}
\begin{aligned}
&P(V_{l_r}<V_{l_{r-1}}, V_{l_r}<Y_r|A_{r-1})=P(V_{l_r}<W, V_{l_r}<Y_r)=\\
& P(V_{l_r}<\min(W,Y_r)),
\text{where}\ W\sim\text{Exp}(\sum_{i=1}^{l_{r-1}}\theta_i)\    \text{is independent of}\ V_{l_r}\ \text{and}\ Y_r.
\end{aligned}
\end{equation}
Thus, by Lemmas \ref{1} and \ref{2},
it follows from \eqref{simplecond} that
\begin{equation}\label{finalcondform}
P(V_{l_r}<V_{l_{r-1}}, V_{l_r}<Y_r|A_{r-1})=\frac{\theta_{l_r}}{\sum_{i=1}^{l_r}\theta_i}.
\end{equation}
From \eqref{Uktogetheragain}, \eqref{condwithA} and \eqref{finalcondform}, we conclude that
\begin{equation}\label{jointprob}
P_n^{\mathcal{L}_\text{inv};\{\theta_j\}_{j=1}^n}(U_{l_1}=U_{l_2}=\cdots=U_{l_r}=1)=\prod_{k=1}^r\frac{\theta_{l_k}}{\sum_{i=1}^{l_k}\theta_i}.
\end{equation}
Now \eqref{fullindep} follows from   \eqref{Ukalone} and \eqref{jointprob}.
\hfill $\square$

\

\noindent \it Proof of Proposition \ref{BrussresultMallows}.\rm\
Consider first the case that the smallest number is of highest rank.
By the paragraph before the statement of Proposition \ref{BrussresultLuce}, it suffices to show that
the random variables $\{U_j\}_{j=1}^n$ are independent, where
 $U_j=1_{\{\sigma_j=\min(\sigma_1\cdots \sigma_j)\}}$.
Let $\tilde U_j=1_{\{\sigma^{-1}_j=\min(\sigma^{-1}_1\cdots \sigma^{-1}_j)\}}$.
Since the Mallows distributions are invariant under the map $\sigma\to\sigma^{-1}$, it suffices to show that
$\{\tilde U_j\}_{j=1}^n$ are independent. This can be shown easily using the second of the online constructions that we presented for the Mallows distributions.
The random variable $\tilde U_1$ is deterministically equal to 1. For $j\in\{2,\cdots, n\}$,
the event $\{\tilde U_j=1\}$ is the event that the number $j$ appears to the left of all the numbers $\{1,\cdots, j-1\}$.
Thus, by the second online construction, the event  $\{\tilde U_j=1\}$  occurs if and only if $X_j=j-1$, where
$X_j$ is given in \eqref{X-inv}.
Since the $\{X_j\}_{j=2}^n$ are independent, it follows that $\{\tilde U_j\}_{j=1}^n$ are independent.

The proof in the case that the largest number is of highest rank is carried out similarly.
Now $U_j=1_{\{\sigma_j=\max(\sigma_1\cdots \sigma_j)\}}$ and
 $\tilde U_j=1_{\{\sigma^{-1}_j=\max(\sigma^{-1}_1\cdots \sigma^{-1}_j)\}}$.
The event $\{\tilde U_j=1\}$ is the event that the number $j$ appears to the right of all the numbers $\{1,\cdots, j-1\}$,
which occurs if and only if $X_j=0$.
\hfill $\square$

\section{Proof of Theorem \ref{q>1}}\label{pfq>1}
The proof of the theorem follows along similar lines  to the proof of Theorem P-2 in \cite{P22a}, however parts (ii) and (iii) here are considerably more delicate than  parts (ii) and (iii)  of Theorem P-2.

\noindent\it Proof of part (i).\rm\ We substitute $q=1+\frac cn$ and $M=M_n\sim bn$, where $b\in(0,1)$, in \eqref{exactform}. This gives
\begin{equation}\label{cnbn}
\begin{aligned}
&P_n^{\mathcal{M};q}(\mathcal{S}^\downarrow(n,M_n))\sim\\
&\frac cn\thinspace\frac1{(1+\frac cn)^n-1}(1+\frac cn)^{(1-b)n}\left((1+\frac cn)^{bn}-1\right)\sum_{j=[bn]+1}^n\frac1{(1+\frac cn)^{j-1}-1}\sim\\
&\frac c{e^c-1}e^{c(1-b)}(e^{cb}-1)\frac1n\sum_{j=[bn]+1}^n\frac1{e^{\frac{c(j-1)}n}-1}.
\end{aligned}
\end{equation}
We have
\begin{equation}\label{integrallog}
\begin{aligned}
&\lim_{n\to\infty}\frac1n\sum_{j=[bn]+1}^n\frac1{e^{\frac{c(j-1)}n}-1}=\int_b^1\frac1{e^{cx}-1}dx=\int_b^1\frac{e^{-cx}}{1-e^{-cx}}dx=\\
&\frac1c\log(1-e^{-cx})|_b^1=\frac1c\log\frac{1-e^{-c}}{1-e^{-bc}}.
\end{aligned}
\end{equation}
From \eqref{cnbn} and \eqref{integrallog} we conclude that
\begin{equation}\label{cncase}
\begin{aligned}
&\lim_{n\to\infty}P_n^{\mathcal{M};q}(\mathcal{S}^\downarrow(n,M_n))=\frac c{e^c-1}e^{c(1-b)}(e^{cb}-1)\frac1c\log\frac{1-e^{-c}}{1-e^{-bc}}=\\
&\frac{1-e^{-bc}}{1-e^{-c}}\log\frac{1-e^{-c}}{1-e^{-cb}},\ \text{if}\ M_n\sim bn,\ b\in(0,1).
\end{aligned}
\end{equation}
Let $H(b)=\frac{1-e^{-bc}}{1-e^{-c}}\log\frac{1-e^{-c}}{1-e^{-cb}}$. Note that $H(0^+)=H(1)=0$ and $H$ is nonnegative.
Differentiating, one readily finds that $H'(b)$ vanishes if and only if $\frac{1-e^{-c}}{1-e^{-bc}}=e$.
The unique $b$ solving this equation, which we denote by $b^*$, is given by
\begin{equation}\label{b*}
b^*=-\frac1c\log(1-e^{-1}+e^{-c-1})=\frac1c\log(1+\frac{1-e^{-c}}{e-1+e^{-c}}).
\end{equation}
Thus, $H$ attains its maximum at $b^*$. Since $\frac{1-e^{-c}}{1-e^{-b^*c}}=e$, we have $H(b^*)=\frac1e$. Using this with \eqref{cncase},
we  conclude that the optimal strategy is $M=M_n^*\sim n\left(\frac1c\log(1+\frac{1-e^{-c}}{e-1+e^{-c}})\right)$, and
$\lim_{n\to\infty}P_n^{\mathcal{M};q}(\mathcal{S}^\downarrow(n,M^*_n))=\frac1e$.
Finally, it is easy to show that
 $\lim_{c\to\infty} \frac1c\log\big(1+\frac{1-e^{-c}}{e-1+e^{-c}}\big)=0$ and $\lim_{c\to0} \frac1c\log\big(1+\frac{1-e^{-c}}{e-1+e^{-c}}\big)=\frac1e$.
\hfill $\square$

\

\noindent \it Proof of part (ii).\rm\
Substituting $q=1+\frac c{n^\alpha}$ and $M=M_n$ in \eqref{exactform}, we obtain
\begin{equation}\label{cnalpha}
\begin{aligned}
&P_n^{\mathcal{M};q}(\mathcal{S}^\downarrow(n,M_n))=\\
&\frac c{n^\alpha}\frac1{(1+\frac c{n^\alpha})^n-1}(1+\frac c{n^\alpha})^{n-M_n-1}\left((1+\frac c{n^\alpha})^{M_n}-1\right)\sum_{j=M_n+1}^n\frac1{(1+\frac c{n^\alpha})^{j-1}-1}.
\end{aligned}
\end{equation}
First assume that $M_n$ satisfies $n^\alpha=o(M_n)$. Since $\lim_{n\to\infty}(1+\frac c{n^\alpha})^n=\infty$, we have
\begin{equation}\label{easyupper}
\frac1{(1+\frac c{n^\alpha})^n-1}(1+\frac c{n^\alpha})^{n-M_n-1}\left((1+\frac c{n^\alpha})^{M_n}-1\right)\le 2,\ \text{for sufficiently large}\ n.
\end{equation}
Since $(1+\frac1x)^x$ is increasing in $x>0$ and is equal to $\frac94$   when $x=2$, we certainly have
$(1+\frac c{n^\alpha})^{j-1}-1\ge 2^\frac{c(j-1)}{n^\alpha}$ for sufficiently large $n$.
Thus,
\begin{equation}\label{est}
\sum_{j=M_n+1}^n\frac1{(1+\frac c{n^\alpha})^{j-1}-1}\le\sum_{j=M_n+1}^\infty(\frac12)^\frac{c(j-1)}{n^\alpha}=\frac{(\frac12)^{\frac{cM_n}{n^\alpha}}}{1-(\frac12)^\frac c{n^\alpha}}.
\end{equation}
Since $1-(\frac12)^\frac c{n^\alpha}=1-e^{-\frac c{n^\alpha}\log 2}=O(n^{-\alpha})$ and since $n^\alpha=o(M_n)$, it follows from \eqref{cnalpha}-\eqref{est} that
$\lim_{n\to\infty}P_n^{\mathcal{M};q}(\mathcal{S}^\downarrow(n,M_n))=0$.
Thus, the optimal $M_n$ satisfies $M_n=O(n^\alpha)$.

Now assume $M_n\sim bn^\alpha$, where $b\in(0,\infty)$.
Then we have
\begin{equation}\label{Mnbn}
\begin{aligned}
&P_n^{\mathcal{M};q}(\mathcal{S}^\downarrow(n,M_n))=\\
&\frac c{n^\alpha}\frac1{(1+\frac c{n^\alpha})^n-1}(1+\frac c{n^\alpha})^{n-M_n-1}\left((1+\frac c{n^\alpha})^{M_n}-1\right)\sum_{j=M_n+1}^n\frac1{(1+\frac c{n^\alpha})^{j-1}-1}=\\
&\frac c{n^\alpha}\frac{(1+\frac c{n^\alpha})^{n-1}}{(1+\frac c{n^\alpha})^n-1}\frac{(1+\frac c{n^\alpha})^{M_n}-1}{(1+\frac c{n^\alpha})^{M_n}}
\sum_{j=M_n+1}^n\frac1{(1+\frac c{n^\alpha})^{j-1}-1}\sim\\
&\frac c{n^\alpha}\frac{e^{cb}-1}{e^{cb}}\sum_{j=M_n+1}^n\frac1{(1+\frac c{n^\alpha})^{j-1}-1}\sim\frac c{n^\alpha}\frac{e^{cb}-1}{e^{cb}}\sum_{j=[bn^\alpha]+1}^n\frac1{(1+\frac c{n^\alpha})^{j-1}-1}.
\end{aligned}
\end{equation}
The transition on the final line above follows because  $\lim_{n\to\infty}\sum_{j=M_n+1}^n\frac1{(1+\frac c{n^\alpha})^{j-1}-1}=\infty$, which is rather easy to see and which in any case will come out from what follows below.
We also have
\begin{equation}\label{sumsim}
\sum_{j=[bn]+1}^n\frac1{(1+\frac c{n^\alpha})^{j-1}-1}\sim\sum_{j=[bn^\alpha]+1}^n\frac1{e^{\frac{c(j-1)}{n^\alpha}}-1}.
\end{equation}
In order not to interrupt the main track of the proof, we postpone the proof of \eqref{sumsim} until the end.

Let $\epsilon>0$. Writing
\begin{equation}\label{sumanalysis1}
\begin{aligned}
&\sum_{j=[bn^\alpha]+1}^n\frac1{e^{\frac{c(j-1)}{n^\alpha}}-1}=\left(\frac1{e^{\frac{c[bn^\alpha]}{n^\alpha}}-1}+\frac1{e^{\frac{c([bn^\alpha]+1)}{n^\alpha}}-1}+\cdots
\frac1{e^{\frac{c([bn^\alpha]+[\epsilon n^\alpha]-1)}{n^\alpha}}-1}\right)+\\
&\left(\frac1{e^{\frac{c([bn^\alpha]+[\epsilon n^\alpha])}{n^\alpha}}-1}+\cdots
\frac1{e^{\frac{c([bn^\alpha]+2[\epsilon n^\alpha]-1)}{n^\alpha}}-1}\right)+\cdots,
\end{aligned}
\end{equation}
and noting that
\begin{equation}\label{sumanalysis2}
\begin{aligned}
&[\epsilon n^\alpha]\frac1{e^{\frac{c([bn^\alpha]+(l+1)[\epsilon n^\alpha]-1)}{n^\alpha}}-1}\le
\left(\frac1{e^{\frac{c([bn^\alpha]+l[\epsilon n^\alpha])}{n^\alpha}}-1}+\cdots
\frac1{e^{\frac{c([bn^\alpha]+(l+1)[\epsilon n^\alpha]-1)}{n^\alpha}}-1}\right)\le\\
&[\epsilon n^\alpha]\frac1{e^{\frac{c([bn^\alpha]+l[\epsilon n^\alpha])}{n^\alpha}}-1},\ l=0,1,\cdots, \\
\end{aligned}
\end{equation}
it follows upon letting $n\to\infty$ and then letting $\epsilon\to0$ that
\begin{equation}\label{sumanalysisanswer}
\begin{aligned}
&\lim_{n\to\infty}\frac1{n^\alpha}\sum_{j=[bn^\alpha]+1}^n\frac1{e^{\frac{c(j-1)}{n^\alpha}}-1}=\int_b^\infty\frac1{e^{cx}-1}dx=\int_b^\infty\frac{e^{-cx}}{1-e^{-cx}}dx=
-\frac1c\log(1-e^{-cb}).
\end{aligned}
\end{equation}
From \eqref{Mnbn} and  \eqref{sumanalysisanswer}, we conclude that
\begin{equation}\label{formulaforb}
\lim_{n\to\infty}P_n^{\mathcal{M};q}(\mathcal{S}^\downarrow(n,M_n))=-(1-e^{-cb})\log(1-e^{-cb}).
\end{equation}
The function $-x\log x$, for $x\in(0,1)$, attains its maximum value of $\frac1e$  at $x=\frac1e$.
Thus, the righthand side of \eqref{formulaforb} as a function of $b\in(0,\infty)$ attains its maximum value $\frac1e$  when $1-e^{-cb}=\frac1e$, or equivalently,
$b=\frac1c\left(1-\log(e-1)\right)$.
This proves part (ii).

We now go back to prove \eqref{sumsim}.
It is clear that for any $B>b$,
\begin{equation}\label{simbB}
\sum_{j=M_n+1}^{[Bn^\alpha]}\frac1{(1+\frac c{n^\alpha})^{j-1}-1}\sim\sum_{j=[bn^\alpha]+1}^{[Bn^\alpha]}\frac1{e^{\frac{c(j-1)}{n^\alpha}}-1}.
\end{equation}
Also,
\begin{equation}\label{lower}
\begin{aligned}
&\sum_{j=[bn^\alpha]+1}^{[Bn^\alpha]}\frac1{(1+\frac c{n^\alpha})^{j-1}-1}\ge([(b+1)n^\alpha]-[bn^\alpha])e^{-(b+1)c}\ge\\
& (n^\alpha-2)e^{-(b+1)c},\  B\ge b+1.
\end{aligned}
\end{equation}
And for an appropriate constant $K$, independent of $B\ge1$,
\begin{equation}\label{upper}
\begin{aligned}
&\sum_{j=[Bn^\alpha]+1}^n\frac1{(1+\frac c{n^\alpha})^{j-1}-1}\le K\sum_{j=[Bn^\alpha]+1}^\infty\left(1+\frac c{n^\alpha}\right)^{1-j}=\\
&K\left(1+\frac c{n^\alpha}\right)^{-[Bn^\alpha]}\frac{n^\alpha(1+\frac c{n^\alpha})}c\sim\frac Kce^{-Bc}n^\alpha.
\end{aligned}
\end{equation}
Now \eqref{sumsim} follows from \eqref{simbB}-\eqref{upper} and the fact that $B$ can be chosen arbitrarily large.
\hfill $\square$
\

\noindent \it Proof of part (iii).\rm\ We fix $q>1$. First assume that $M=M_n\to\infty$.
From \eqref{exactform}
we have for sufficiently large $n$,
\begin{equation*}
\begin{aligned}
&P_n^{\mathcal{M};q}(\mathcal{S}^\uparrow(n,M_n))=
\frac{q-1}{q^n-1}q^{n-M_n-1}(q^{M_n}-1)\sum_{j=M_n+1}^n\frac1{q^{j-1}-1}\le\\
&\frac{q-1}q\sum_{j=M_n+1}^n\frac1{q^{j-1}-1}\stackrel{n\to\infty}{\rightarrow0}.
\end{aligned}
\end{equation*}
Thus, the optimal strategy must be some fixed $M$.  From \eqref{exactform}, we have
\begin{equation}\label{fixedqprob}
\begin{aligned}
&\lim_{n\to\infty}P_n^{\mathcal{M};q}(\mathcal{S}^\uparrow(n,M))=
\lim_{n\to\infty}\frac{q-1}{q^n-1}q^{n-M-1}(q^M-1)\sum_{j=M+1}^n\frac1{q^{j-1}-1}=\\
&\frac{q-1}q(1-q^{-M})\sum_{j=M}^\infty\frac1{q^j-1},\ \text{if}\ M\ge1,\\
&\lim_{n\to\infty}P_n^{\mathcal{M};q}(\mathcal{S}^\uparrow(n,M))=\frac{q-1}q,\ \text{if}\ M=0.
\end{aligned}
\end{equation}

Let
\begin{equation}\label{G}
G(M)=\begin{cases}(1-q^{-M})\sum_{j=M}^\infty\frac1{q^j-1},\ \text{if}\ M\ge1;\\ 1,\ \text{if}\  M=0.\end{cases}
\end{equation}
We have
\begin{equation*}\label{Gdifference}
\begin{aligned}
&G(M)-G(M-1)=(1-q^{-M})\sum_{j=M}^\infty\frac1{q^j-1}-(1-q^{-M+1})\sum_{j=M-1}^\infty\frac1{q^j-1}=\\
&(q^{-M+1}-q^{-M})\sum_{j=M}^\infty\frac1{q^j-1}-(1-q^{-M+1})\frac1{q^{M-1}-1}=\\
&q^{-M+1}\left((1-\frac1q)\sum_{j=M}^\infty\frac1{q^j-1}-1\right), \ M\ge2.
\end{aligned}
\end{equation*}
Also,
$$
G(1)-G(0)=(1-\frac1q)\sum_{j=1}^\infty\frac1{q^j-1}-1.
$$
Consequently,
\begin{equation}\label{Gdiff}
G(M)\ge G(M-1)\ \ \text{if and only if}\ \sum_{j=M}^\infty\frac1{q^j-1}\ge\frac q{q-1},\ \text{for}\  M\ge1.
\end{equation}
Thus,
$G$ is unimodal and $G$ attains its maximum at
\begin{equation}\label{M*}
M^*:=\begin{cases}\max\{M\ge1: \sum_{j=M}^\infty\frac1{q^j-1}\ge\frac q{q-1}\},\ \text{if}\ \sum_{j=1}^\infty\frac1{q^j-1}\ge\frac q{q-1};\\ 0,\ \text{if}\ \sum_{j=1}^\infty\frac1{q^j-1}<\frac q{q-1}.\end{cases}
\end{equation}
(If $\sum_{M^*}^\infty\frac1{q^j-1}=\frac q{q-1}$, then the maximum is also attained at $M^*-1$.)
From \eqref{G}, \eqref{M*} and  \eqref{fixedqprob}
we have
\begin{equation}\label{finalfixedq}
\lim_{n\to\infty}P_n^{\mathcal{M};q}(\mathcal{S}^\uparrow(n,M^*))=\begin{cases}\frac{q-1}q(1-q^{-M^*})\sum_{j=M^*}^\infty\frac1{q^j-1}\ge1-q^{-M^*},\ \text{if}\ M^*\ge1;\\
\frac{q-1}q,\ \text{if}\ M^*=0.\end{cases}
\end{equation}

 By Proposition \ref{BrussresultLuce},
the right hand side of \eqref{finalfixedq} is greater or equal to $\frac1e$.
It remains to show that it is strictly greater than $\frac1e$, except at no more than a countable number of $q\in(1,\infty)$ with no accumulations points in $(1,\infty)$.
For any $M\in\mathbb{N}$, the function $\frac{q-1}q\sum_{j=M}^\infty\frac1{q^j-1}$ is decreasing for $q>1$ and converges to $\infty$ when $q\to1^+$.
This can be seen be writing the function in the form $\frac1q\sum_{j=M}^\infty\frac1{1+q+\cdots+q^{j-1}}$.
Of course, $\lim_{M\to\infty}\frac{q-1}q\sum_{j=M}^\infty\frac1{q^j-1}=0$, for each $q>1$.
%One can check that $\frac{q-1}q\sum_{j=1}^\infty\frac1{q^j-1}=1$ when $q\approx1.734$.
From these facts and \eqref{M*} it follows that there exists a sequence $\{q_M\}_{M=1}^\infty$ that is decreasing and converges to 1,
%with $q_1\approx1.74$,
such that
$$
M^*=M^*(q)=\begin{cases} M,\ \text{if}\ q_{M+1}<q\le q_M,\ M\in\mathbb{N};\\ 0,\  \text{if}\ q>q_1.\end{cases}
$$
From this and \eqref{finalfixedq}, it follows that
$$
\begin{aligned}
&\lim_{n\to\infty}P_n^{\mathcal{M};q}(\mathcal{S}^\uparrow(n,M^*))=\begin{cases}\frac{q-1}q(1-q^{-M})\sum_{j=M}^\infty\frac1{q^j-1}\ge1-q^{-M},\\
\text{if}\
q_{M+1}<q\le q_M,\ M=1,2,\cdots;\\
\frac{q-1}q,\ \text{if}\ q>q_1.\end{cases}
\end{aligned}
$$
Now for any $M,L\in\mathbb{N}$ with $M<L$, the function $\frac{z-1}z(1-z^{-M})\sum_{j=M}^L\frac1{z^j-1}$
is analytic in $\{|z|>1\}$ and converges pointwise there to
$\frac{z-1}z(1-z^{-M})\sum_{j=M}^\infty\frac1{z^j-1}$. Thus, by the Weierstrass convergence theorem,
$\frac{z-1}z(1-z^{-M})\sum_{j=M}^\infty\frac1{z^j-1}$ is analytic in $\{|z|>1\}$.
In particular then, it follows that the function
$\frac{q-1}q(1-q^{-M})\sum_{j=M}^\infty\frac1{q^j-1}$ can
be equal to $\frac1e$ at no more than
 a finite number of $q\in(q_{M+1}, q_M]$.
 \hfill $\square$

\section{Proof of Theorem \ref{Suksec}}\label{pfSuksec}
\noindent \it Proof of part (i).\rm\  We apply \eqref{lucesecmin} with $\theta_i=n+1-i$. Let $M=M_n\sim bn$, with $b\in(0,1)$.
Noting that
$$
\sum_{i=1}^k(n+1-i)=\frac12n(n+1)-\frac12(n-k)(n-k+1),
$$
 we obtain
\begin{equation}\label{formulaMnLuce}
\begin{aligned}
&P_n^{\mathcal{L}_\text{inv};\{\theta_j\}_{j=1}^n}(\mathcal{S}^\downarrow(n,M_n))=\sum_{j=M_n+1}^n
\frac{2(n+1-j)}{n(n+1)}\frac{n(n+1)-(n-M_n)(n-M_n+1)}{n(n+1)-(n-j+1)(n-j+2)}\sim\\
&\frac2n\sum_{j=[bn]+1}^n\left(1-\frac j{n+1}\right)\frac{1-(1-b)^2}{1-(1-\frac{j-1}n)(1-\frac{j-1}{n+1})}\sim2
\int_b^1(1-x)\frac{1-(1-b)^2}{1-(1-x)^2}dx.
\end{aligned}
\end{equation}
From \eqref{formulaMnLuce} we have
\begin{equation}\label{Suklimit}
\begin{aligned}
&\lim_{n\to\infty}P_n^{\mathcal{L}_\text{inv};\{\theta_j\}_{j=1}^n}(\mathcal{S}^\downarrow(n,M_n))=
2\left(1-(1-b)^2\right)\int_b^1\frac{1-x}{1-(1-x)^2}dx,\ \ M_n\sim bn.
\end{aligned}
\end{equation}
Writing
$$
\frac{1-x}{1-(1-x)^2}=\frac{1-x}{2x-x^2}=\frac12\left(\frac1x-\frac1{2-x}\right),
$$
we obtain
$$
\int_b^1\frac{1-x}{1-(1-x)^2}dx=-\frac12\log b-\frac12\log(2-b)=-\frac12\log(2b-b^2).
$$
Thus, from \eqref{Suklimit},
\begin{equation}\label{finalformb}
\lim_{n\to\infty}P_n^{\mathcal{L}_\text{inv};\{\theta_j\}_{j=1}^n}(\mathcal{S}^\downarrow(n,M_n))=-(2b-2b^2)\log(2b-b^2).
\end{equation}
The function $-x\log x$, for $x\in(0,1)$, attains its maximum value of $\frac1e$  at $x=\frac1e$.
Thus, the right hand side  of \eqref{finalformb} as a function of $b\in(0,1)$ attains its maximum value of $\frac1e$ when
$2b-b^2=\frac1e$, or equivalently,
 $b=1-(1-\frac1e)^\frac12$. This proves part (i).
 \hfill $\square$

 \noindent \it Proof of part (ii).\rm\
 We apply \eqref{lucesecmin} with $\theta_i=i$. Let $M=M_n\sim bn$, with $b\in(0,1)$. We have
\begin{equation*}
\begin{aligned}
&P_n^{\mathcal{L}_\text{inv};\{\theta_j\}_{j=1}^n}(\mathcal{S}^\downarrow(n,M_n))=\sum_{j=M_n+1}^n
\frac j{\frac12n(n+1)}\frac{\frac12M_n(M_n+1)}{\frac12(j-1)j)}\sim
2b^2\sum_{j=[bn]+1}^n\frac1{j-1},
\end{aligned}
\end{equation*}
and thus,
\begin{equation}\label{formulaMnLuceagain}
\lim_{n\to\infty}P_n^{\mathcal{L}_\text{inv};\{\theta_j\}_{j=1}^n}(\mathcal{S}^\downarrow(n,M_n))=-2b^2\log b.
\end{equation}
Let $H(b)=-2b^2\log b$. Then $H'(b)=-4b\log b-2b$, and so
$H'$ vanishes when $\log b=-\frac12$; that is, $b=b^*=e^{-\frac12}$.
One has $H(b^*)=\frac1e$. This completes the proof of part (ii).
\hfill $\square$

\end{document}